\numberwithin{equation}{section}
\newtheorem{theorem}{Theorem}[section]
\newtheorem{lemma}{Lemma}[section]
\newtheorem{remark}{Remark}[section]
\title{The Lifespan of Strong Solutions to the Compressible MHD Equations with Entropy Transport in the Presence of Vacuum}
\author{Yongteng G{\small U}$^{a}$\thanks{Email addresses: guyongteng24@mails.ucas.ac.cn (Y. T. Gu), xdhuang@amss.ac.cn (X. D. Huang). }, Xiangdi H{\small UANG}$^{a}$  \\ 
	{\normalsize a. State Key Laboratory of Mathematical Sciences, Academy of Mathematics and Systems Sciences,}\\
	{\normalsize Chinese Academy of Sciences, Beijing 100190, China;}\\
}
\date{}
\begin{document}
	
	\maketitle
	
	\begin{abstract}
		In this paper, we investigate the finite time blow-up of strong solutions to the compressible magnetohydrodynamic (MHD) system (without magnetic diffusion) coupled with entropy transport, and derive an upper bound for the lifespan of such solutions. We first establish the local well-posedness of strong solutions for bounded domains and study the mechanism of finite-time singularity formation in the 2D radially symmetric case and 3D cylindrically symmetric case. We prove that if the initial density vanishes in an interior region containing the origin and the magnetic field is non-trivial within this vacuum region, the strong solution must blow up in finite time. These results generalize and improve the previous results of Huang-Xin-Yan [Math. Ann. 392 (2025) 2365–2394] for the compressible isentropic MHD equations. Significantly, we extend this blow-up result to the free boundary problem. Our analysis of the boundary's expansion allows us to explicitly estimate the maximum lifespan of the solution.
	
		Keywords: Compressible MHD equations; Entropy transport; Strong solutions; Vacuum; Finite time blow-up. 
	\end{abstract}
	\section{Introduction}
	In this paper, we investigate the compressible magnetohydrodynamic (MHD) system (without magnetic diffusion) coupled with entropy transport. This system can be derived as a specific limit of the general Navier-Stokes-Fourier system for atmospheric flows discussed by Klein \cite{klein}. As a simplification of the Navier-Stokes-Fourier system, we assume that thermal conduction is absent and viscous dissipation is negligible. By further neglecting external geophysical forces such as gravity, we arrive at the following system
\begin{equation}\label{1.1}
	\begin{cases}
		\rho _t+\nabla \cdot \left( \rho u \right) =0,\\
		\left( \rho u \right) _t+\nabla \cdot \left( \rho u\otimes u \right) -\mu \Delta u-\left( \mu +\lambda \right) \nabla \left( \nabla \cdot u \right) +\nabla P\left( \rho ,\tilde{s} \right) =\left( \nabla \times B \right) \times B,\\
		\partial _t\left( \rho \tilde{s} \right) +\nabla \cdot \left( \rho \tilde{s} u \right) =0,\\
		B_t-\nabla \times \left( u\times B \right) =0,\\
		\nabla \cdot B=0,\\
	\end{cases}
\end{equation}
Here, the unknown functions $\rho,u, P, \tilde{s}, B$ denote the ﬂuid density, velocity field, pressure, entropy and magnetic field, respectively. The viscosity coefficients $\mu$ and $\lambda$ satisfy the following physical restrictions
$$\mu >0,\quad \frac{2}{d}\mu +\lambda \ge 0,$$
where $d$ is the spatial dimension. The pressure $P$ is given by the state equation
$$P\left( \rho ,\tilde{s} \right) =\rho ^{\gamma}\varGamma \left( \tilde{s} \right), \quad \gamma>1,$$ 
where $\Gamma(\tilde{s})$ denotes a positive, smooth, and strictly monotone function defined on $\mathbb{R}_+$.
\par
By neglecting the effect of entropy and assuming the pressure law $P = A\rho^\gamma$, we find that equation \eqref{1.1} reduces to the compressible MHD equations. This system has attracted extensive research attention concerning its well-posedness theory. The local existence of strong solutions to the compressible MHD equations with large initial data was established by Vol'pert and Hudjaev \cite{Vol'pert-Hudjaev}, under the assumption that the initial density remains strictly positive. Recently, Fan-Gu-Huang \cite{Fan-Gu-Huang} proved the existence of global large solutions to the three-dimensional compressible magnetohydrodynamic (MHD) equations under the assumption that the viscosity coefficients satisfy  $\mu \left( \rho \right) =\mu \rho ^{\alpha},\lambda \left( \rho \right) =\lambda \rho ^{\alpha}$, provided that the initial density is sufficiently large. In the presence of a vacuum, the local existence of unique strong solutions for the full compressible MHD system has been established in \cite{Fan-Yu}, provided that the initial data satisfy a compatibility condition. Based on work by Huang-Li-Xin \cite{Huang-Li-Xin} regarding Navier-Stokes equations, Li-Xu-Zhang \cite{Li-Xu-Zhang} established the global well-posedness of classical solutions for the 3D isentropic compressible MHD equations, where the initial data requires small energy but allows for large oscillations and vacuum. In addition, a lot of progress has been made regarding models with entropy transport. For instance, Michálek \cite{Michálek} proved the stability of the compressible Navier–Stokes system with an entropy transport equation. Maltese et al. \cite{Maltese-Michálek-Novotný } proved the existence of globally defined weak solutions on condition that the adiabatic
constant $\gamma>3/2$, which corresponds to the classical result of Feireisl et al. \cite{Feireisl-Novotný-Petzeltová} for the isentropic case. Later, the authors in \cite{M. Lukáová-Medvid’ová-Andreas Schömer} established a blow-up criterion for strong solutions in terms of the $L^\infty$-norms of the density and velocity.
\par
 Having reviewed the results relevant to our model, our main focus in this paper is to study whether local solutions to the system with entropy transport blow up in finite time for general large initial data. An early study on the finite time blow-up of compressible Navier-Stokes equations was conducted by Xin \cite{Xin}, who investigated solutions in the space $C^1([0,\infty), H^m(\mathbb{R}^d))$ (with $m>[d/2]+2$) and found that they blow up in finite time when the initial density has compact support. Later, improving upon Xin’s previous result, Xin-Yan\cite{Xin-Yan} established that any classical solution for viscous compressible fluids without heat conduction inevitably blows up in finite time, provided that the initial data have an isolated mass group. Nevertheless, the above results were established assuming that local solutions are sufficiently smooth. Actually, finding such local solutions is difficult, or they might not exist at all. Li-Wang-Xin \cite{Li-Wang-Xin} proved that classical solutions with finite energy do not exist in the inhomogeneous Sobolev space for any short time under certain natural assumptions on the initial data near a vacuum. Therefore, there has been growing research interest in finding solutions that are known to exist locally but blow up in finite time. Recently, it was proved by Merle et al. \cite{Merle-Raphael-p-J} that under conditions of strictly positive density, there exist smooth solutions to the 3D spherically symmetric compressible Navier-Stokes equations that inevitably form a shell singularity in finite time. For cases with interior vacuum, Huang-Xin-Yan \cite{Huang-Yan-Xin} established a class of local strong solutions that exist locally and blow up in finite time by analyzing the balance between viscous stress and the magnetic Lorentz force.
\par
Based on the preceding discussion, we investigate the finite time blow-up of strong solutions to the system \eqref{1.1} in the presence of an interior vacuum. Furthermore, we consider the lifespan of solutions to the free boundary problem for the system \eqref{1.1} under the assumption that local strong solutions exist.
\par
To handle the strong nonlinearity of the pressure, we combine the renormalized mass and entropy equations finding that $P$ satisfies
$$	P_t+u\cdot \nabla P+\gamma P\nabla \cdot u=0.$$
Consequently, system \eqref{1.1} can be reformulated in terms of the variables $(\rho, u, P, B)$ as follows
\begin{equation}\label{1.2}
	\begin{cases}
		\rho _t+\nabla \cdot \left( \rho u \right) =0,\\
		\left( \rho u \right) _t+\nabla \cdot \left( \rho u\otimes u \right) -\mu \Delta u-\left( \mu +\lambda \right) \nabla \left( \nabla \cdot u \right) +\nabla P=\left( \nabla \times B \right) \times B,\\
		P_t+u\cdot \nabla P+\gamma P\nabla \cdot u=0,\\
		B_t-\nabla \times \left( u\times B \right) =0,\\
		\nabla \cdot B=0.\\
	\end{cases}
\end{equation}
Based on this reformulated system (1.2), we will state the main results of this paper in the following section.
	\section{Main results}
	\subsection{Local well-posedness}
We begin by establishing the local well-posedness of System \eqref{1.2} in general bounded domains. Theorem \ref{th1} serves as a foundation for the analysis of finite time blow-up under radial symmetry presented later.
	\begin{theorem}\label{th1}(Local existence of strong solutions-2D case)
		Let $\Omega \subset \mathbb{R} ^2$ be a $2D$ bounded domain with smooth boundary.	Assume that the initial data $(\rho_0, u_0, P_0, B_0)$ satisfy the following conditions
		\begin{equation}\label{1.3}
			\begin{cases}
				0 \le \rho_0, P_0 \in W^{1,q}, \quad B_0 \in W^{1,q}, \quad \nabla \cdot B_0 = 0 \text{ in } \Omega, \\
				u_0 \in H^2(\Omega), \quad u_0|_{\partial \Omega} = 0,
			\end{cases}
		\end{equation}
		for some constant $q>2$, and satisfy the compatibility condition:
		\begin{equation}\label{eq:compatibility}
			-\mu \Delta u_0 - (\mu + \lambda) \nabla (\nabla \cdot u_0) + \nabla P_0 + B_0 \times (\nabla \times B_0) = \rho_0^{1/2} g, \quad \text{in } \Omega,
		\end{equation}
		for some $g \in L^2(\Omega)$. Then, there exists a small time $T^* > 0$ and a unique strong solution $(\rho, u, P, B)$ to the initial boundary value problem \eqref{1.2} and \eqref{1.3} such that
		\begin{equation}\label{11.15}
			\begin{cases}
				(\rho, P, B) \in C([0, T^*]; W^{1,q}), \quad (\rho_t, P_t, B_t) \in C([0, T^*]; L^q), \\
				u \in C([0, T^*]; H^2) \cap L^2(0, T^*; W^{2,q}), \\
				u_t \in L^2(0, T^*; H^1), \quad \sqrt{\rho} u_t \in L^\infty(0, T^*; L^2).
			\end{cases}
		\end{equation}
	\end{theorem}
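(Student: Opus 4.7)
The approach is the classical Cho--Kim construction adapted to the compressible MHD system with entropy transport: perform a density regularization to remove the vacuum, solve the regularized problem via a linearization-iteration scheme, and pass to the limit using uniform estimates. Concretely, I would replace $\rho_0$ by $\rho_0^\delta = \rho_0 + \delta$ for small $\delta > 0$ (modifying the compatibility data $g$ accordingly so it stays consistent as $\delta \to 0^+$), obtain a strong solution with strictly positive initial density, and then send $\delta \to 0^+$.

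For the iteration, given $(\rho^k, u^k, P^k, B^k)$, I would define $\rho^{k+1}, P^{k+1}, B^{k+1}$ as solutions of the linear transport-type equations driven by the advecting velocity $u^k$; these are solvable in $W^{1,q}$ by the method of characteristics. Then I would solve for $u^{k+1}$ the linear Lam\'e problem
\begin{equation*}
\rho^{k+1} u_t^{k+1} + \rho^{k+1} u^k \cdot \nabla u^{k+1} - \mu \Delta u^{k+1} - (\mu+\lambda)\nabla(\nabla \cdot u^{k+1}) = -\nabla P^{k+1} + (\nabla \times B^{k+1}) \times B^{k+1}
\end{equation*}
with the no-slip boundary condition; strict positivity of $\rho^{k+1}$ together with elliptic regularity for the Lam\'e operator produces a solution in the expected class.

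The main a priori bounds, uniform in both $k$ and $\delta$ on a short interval $[0, T^*]$, are: (i) $\|(\rho^{k+1}, P^{k+1}, B^{k+1})\|_{W^{1,q}}$, controlled via Gronwall on the transport equations in terms of $\|\nabla u^k\|_{L^\infty}$, which is dominated by $\|u^k\|_{W^{2,q}}$ through the embedding $W^{2,q} \hookrightarrow W^{1,\infty}$ valid for $q > 2$; (ii) $\|u^{k+1}\|_{H^2}$, obtained from the Lam\'e regularity estimate applied to the momentum equation; and (iii) the bound $\sup_{t \leq T^*}\|\sqrt{\rho^{k+1}} u_t^{k+1}(t)\|_{L^2}^2 + \int_0^{T^*}\|\nabla u_t^{k+1}\|_{L^2}^2 \, ds$, obtained by differentiating the momentum equation in time and testing against $u_t^{k+1}$. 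A contraction argument in a weaker norm, e.g.\ $L^\infty_t L^2_x$, then yields convergence of the iterates to a fixed point $(\rho^\delta, u^\delta, P^\delta, B^\delta)$. Using the $\delta$-uniform estimates, weak-$*$ compactness, and Aubin--Lions for strong convergence in the nonlinear terms, I would send $\delta \to 0^+$ to obtain a solution in the class \eqref{11.15}; uniqueness follows from the standard energy estimate on the difference of two solutions.

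The main obstacle is closing estimate (iii) uniformly as $\delta \to 0^+$. At $t = 0$ this relies crucially on the compatibility condition \eqref{eq:compatibility}: testing the momentum equation against $u_t(0)$ yields $\|\sqrt{\rho_0^\delta}\, u_t^{k+1}(0)\|_{L^2} \leq \|g\|_{L^2} + O(\sqrt{\delta})$, which stays bounded as $\delta \to 0^+$. For positive times, $\partial_t$-differentiating the momentum equation produces cubic terms such as $\int \rho |u| |u_t| |\nabla u|$ and magnetic commutators of the form $\int (\nabla \times B_t) \times B \cdot u_t$ that must be absorbed. In 2D, since $H^1 \not\hookrightarrow L^\infty$, these require careful use of $W^{1,q} \hookrightarrow L^\infty$ together with Ladyzhenskaya-type interpolation $\|v\|_{L^4} \lesssim \|v\|_{L^2}^{1/2}\|\nabla v\|_{L^2}^{1/2}$ in order to retain the correct time-integrability and close the estimate on a sufficiently short time interval.
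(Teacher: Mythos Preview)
Your proposal is correct and follows essentially the same Cho--Kim construction as the paper: density regularization $\rho_0 \mapsto \rho_0+\delta$, a linearization-iteration scheme in which $\rho^{k+1},P^{k+1},B^{k+1}$ are obtained from transport equations driven by $u^k$ and $u^{k+1}$ from the linear Lam\'e problem, uniform $W^{1,q}$/$H^2$/$\sqrt{\rho}u_t$ estimates via Gronwall and elliptic regularity (with the compatibility condition controlling $\|\sqrt{\rho}u_t(0)\|_{L^2}$), contraction in a low norm, and passage to the limit. The only cosmetic difference is ordering: the paper first passes $\delta\to 0$ at the level of the \emph{linearized} problem and then iterates already with vacuum, whereas you propose to iterate with $\delta>0$ fixed and send $\delta\to 0$ only in the nonlinear solution; the required estimates are identical either way.
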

\begin{remark}
The local existence result mentioned above also holds for the three-dimensional case, provided that the range of $q$ is adjusted to $3 < q \le 6$. The proof is entirely analogous to that of the 2D case. This ensures the existence of the local solutions required to demonstrate finite time blow-up in the axisymmetric cylinders.
\end{remark}
\subsection{Finite time blow-up for 2D Disk}
Now we consider the radially symmetric solutions of \eqref{1.2}. In this case, we investigate solutions within the class of radially symmetric functions $(\rho, \vec{u}, P, \vec{B})$ defined by
\begin{equation*}
		\rho(x,t) = \rho(r,t), \quad \vec{u}(x,t) = \frac{(x_1,x_2)}{r} u(r,t),\quad P(x,t)=P(r,t), \quad \vec{B}(x,t) = \frac{(-x_2, x_1)}{r} B(r,t).
\end{equation*}
Here $(\rho,u,P,B)$ are scalar functions. \par
In radially symmetric coordinates, the original system \eqref{1.2} can be written as
	\begin{equation}\label{11.16}
	\begin{cases}
		\rho_t + (\rho u)_r + \frac{\rho u}{r} = 0, \\[8pt]
		\rho (u_t + u u_r) + P_r = (2\mu + \lambda) \left( u_r + \frac{u}{r} \right)_r - B \left( B_r + \frac{B}{r} \right), \\[8pt]
		P_t + u P_r + \gamma P \left( u_r + \frac{u}{r} \right) = 0, \\[8pt]
		B_t + (u B)_r = 0,
	\end{cases}
\end{equation}
with $0 \le r \le R_0< \infty$ and the initial data
\begin{equation}
	\rho \left( r,0 \right) =\rho _0\left( r \right) ,\quad  u\left( r,0 \right) =u_0\left( r \right) ,\quad P\left( r,0 \right) =P_0\left( r \right) ,\quad B\left( r,0 \right) =B_0(r),
\end{equation}
and the Dirichlet boundary condition 
\begin{equation}
	u\left( R_0,t \right) =0 ,\,t\ge 0.
\end{equation}
Note also that the continuity of the velocity and magnetic field in the center will force
\begin{equation}\label{11.10}
	u(0, t) = B(0, t) = 0, \quad t \ge 0.
\end{equation}
Given the existence of local strong solutions in general bounded domains, we turn to the blow-up problem within a radially symmetric framework. This work presents the first proof of finite-time singularity formation for the simplified full Navier-Stokes equations in a regime where local strong solutions exist. 
\begin{theorem}\label{Thm 1.2}
	(Blow-up of Strong Solutions for 2D Disk)
	Let $\Omega = B_{R_0}$ denote a two-dimensional disk of radius $R_0$ centered at the origin. Suppose the initial data $(\rho_0, \vec{u}_0, P_0,\vec{B}_0)$ for the initial-boundary value problem \eqref{11.16}-\eqref{11.10} satisfy the regularity conditions
	\begin{equation} \label{eq:3.6}
		\begin{cases}
			(\rho_0, B_0) \in W^{1,q}(\Omega), \quad \rho_0 \ge 0, \\
			\vec{u}_0 \in H^2(\Omega), \quad \vec{u}_0|_{\partial\Omega} = 0,
		\end{cases}
		\quad \text{and} \quad \nabla \cdot \vec{B}_0 = 0 \text{ in } \Omega,
	\end{equation}
	for some constant $q > 2$, along with the compatibility condition:
	\begin{equation} \label{eq:3.7}
		-\mu \Delta \vec{u}_0-(\mu +\lambda )\nabla (\nabla \cdot \vec{u}_0)+\nabla P_0+\vec{B}_0\times (\nabla \times \vec{B}_0)=\rho_0 ^{1/2}\vec{g},\quad \mathrm{in} \Omega ,
	\end{equation}
	for some $\vec{g} \in L^2$. Under these conditions, Theorem \ref{th1} guarantees the existence of a unique local strong solution $(\rho, \vec{u}, P, \vec{B})$ on a time interval $[0, T^*]$.
	
	Assume further that the initial data admit the following radial symmetry
	\begin{equation} \label{11.19}
		\rho_0(\vec{x}) = \rho_0(r), \quad \vec{u}_0(\vec{x}) = \frac{\vec{x}}{r} u_0(r), \quad P_0(\vec{x})=P_0(r),\quad \vec{B}_0(\vec{x}) = \frac{(-x_2, x_1)}{r} B_0(r).
	\end{equation}
	Then, the unique strong solution preserves this symmetry, taking the form
	\begin{equation} \label{eq:3.9}
		\rho(\vec{x},t) = \rho(r,t), \quad \vec{u}(\vec{x},t) = \frac{\vec{x}}{r} u(r,t),\quad P(\vec{x},t)=P(r,t), \quad \vec{B}(\vec{x},t) = \frac{(-x_2, x_1)}{r} B(r,t),
	\end{equation}
	where $\rho, u, P, B$ are scalar functions.
	
	If there exists a radius $r_0 \in (0, R_0)$ such that the initial density and pressure vanish in the inner ball $B_{r_0}$, i.e.
	\begin{equation} \label{eq:3.10}
		\rho_0(r) = 0,\,P_0(r)=0 \quad \text{for } r \in [0, r_0], 
	\end{equation}
	while the magnetic field satisfies the non-degeneracy condition:
	\begin{equation} \label{eq:3.11}
		\int_0^{r_0} B_0(r) \, dr \ne 0, 
	\end{equation}
	then the local strong solution cannot exist globally and must blow up in finite time.
	
	Specifically, for any constant $\alpha \in (1, 2)$, the lifespan $T$ of the solution is bounded by
	\begin{equation} \label{eq:3.12}
	T\le \left( \frac{1}{\sqrt{2\mu +\lambda}R_0}\frac{(2-\alpha )^2\left| \int_0^{r_0}{B_0}(r)\,dr \right|^2}{2(\frac{\alpha}{\sqrt{2\alpha -2}}+\frac{\alpha +1}{\sqrt{2\alpha}})} \right) ^{-2}E_0,
	\end{equation}
	where $E_0$ denotes the initial total energy
	\begin{equation}
	E_0=\frac{1}{2}\int_{\Omega}{\left( \rho _0|\vec{u}_0|^2+|\vec{B}_0|^2 \right)}dx+\int_{\Omega}{\frac{P}{\gamma -1}}dx.
	\end{equation}
\end{theorem}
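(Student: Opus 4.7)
The plan is to construct an expanding vacuum region driven by magnetic pressure and derive a coercive growth rate that the finite initial energy cannot sustain. First, using Theorem~\ref{th1} and uniqueness of strong solutions, I obtain a unique local strong solution preserving the radial ansatz \eqref{eq:3.9}. Defining the vacuum boundary trajectory $r(t)$ by $\dot r(t) = u(r(t), t)$ with $r(0) = r_0$, I treat the continuity and pressure equations as pure transport equations to deduce that $\rho(s, t) = P(s, t) = 0$ throughout $[0, r(t)]$. A direct computation using $B_t + (uB)_r = 0$ together with $u(0, t) = 0$ yields conservation of magnetic flux, $M(t) := \int_0^{r(t)} B(s, t)\, ds \equiv M_0 \neq 0$.

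Next I reduce the momentum equation inside the vacuum. With $\rho = P = 0$ it collapses to $(2\mu+\lambda)(u_r + u/r)_r = B(B_r + B/r)$; multiplying by $r^2$ and noting $r^2 B B_r + r B^2 = \tfrac12 (r^2 B^2)_r$ puts this into the compact form $(2\mu + \lambda)\, r^2 D_r = \tfrac{1}{2}(r^2 B^2)_r$, where $D = u_r + u/r$. Integrating from $0$ to $r$, using $B(0, t) = 0$ and $\int_0^r s^2 D_s\, ds = r^2 D(r) - 2 r u(r)$, yields the pointwise identity $r^2 (u/r)_r = r B^2/(2(2\mu+\lambda))$ on $[0, r(t)]$. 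Multiplying this by the weight $s^{\alpha-1}$ (with $\alpha \in (1, 2)$) and integrating by parts produces the master identity
\begin{equation*}
r(t)^{\alpha-1} u(r(t), t) = \alpha \int_0^{r(t)} s^{\alpha-2} u(s,t)\, ds + \frac{1}{2(2\mu+\lambda)} \int_0^{r(t)} s^{\alpha-1} B^2(s, t)\, ds,
\end{equation*}
together with an analogous identity from the weight $s^\alpha$ (featuring $(\alpha+1) \int s^{\alpha-1} u\, ds$ in place of $\alpha \int s^{\alpha-2} u\, ds$).

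To close the estimate, the standard energy identity gives $E(t) \le E_0$ and, via the radial cancellation $\int_0^{R_0} u u_r\, dr = 0$, the dissipation bound $(2\mu + \lambda) \int_0^T \int_0^{R_0}(u_r^2 + u^2/r^2) r\, dr\, dt \le E_0 / (2\pi)$. Weighted Cauchy--Schwarz gives the lower bound $\int_0^{r(t)} s^{\alpha-1} B^2\, ds \ge (2-\alpha) M_0^2/R_0^{2-\alpha}$, while Cauchy--Schwarz against the weights $s^{2\alpha-3}$ and $s^{2\alpha-1}$ controls the velocity moments $\int s^{\alpha-2} u\, ds$ and $\int s^{\alpha-1} u\, ds$ by the dissipation, producing the factors $1/\sqrt{2\alpha-2}$ and $1/\sqrt{2\alpha}$ respectively. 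Inserting these bounds into the two weighted identities, integrating in $t \in [0, T]$, and using $r(T) \le R_0$ produces a quadratic-in-$\sqrt T$ inequality; extracting $\sqrt T$ via $\sqrt{a+b} \le \sqrt a + \sqrt b$ yields
\begin{equation*}
(2-\alpha)^2 M_0^2 \sqrt T \le 2 R_0 \sqrt{(2\mu+\lambda) E_0}\left(\frac{\alpha}{\sqrt{2\alpha-2}} + \frac{\alpha+1}{\sqrt{2\alpha}}\right),
\end{equation*}
which rearranges to \eqref{eq:3.12}.

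The main technical obstacle lies in the final bookkeeping: the constant term arising from the trivial bound $r(T)^\alpha \le R_0^\alpha$ and the $\sqrt{T E_0}$ term arising from the energy dissipation must combine, through the two weighted identities used in tandem, to produce the single compact factor $\alpha/\sqrt{2\alpha-2} + (\alpha+1)/\sqrt{2\alpha}$ and the sharp $M_0^{-4}$ scaling of the lifespan rather than a spurious $M_0^{-2}$ piece. A further subtlety is that $u_r(0, t)$ (equivalently $D(0, t) = 2 u_r(0, t)$) admits no pointwise energy control; this is circumvented by employing only integrated spatial moments of $u$ in the Cauchy--Schwarz step.
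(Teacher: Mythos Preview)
Your overall strategy matches the paper's: transport the vacuum region along particle paths to get $\rho=P=0$ on $[0,R(t)]$, conserve $\int_0^{R(t)}B\,dr$, reduce the momentum equation to $(2\mu+\lambda)D_r=B(B_r+B/r)$ with $D=u_r+u/r$, and play a weighted identity against the energy dissipation bound. However, your execution of the core estimate differs from the paper's and leaves exactly the gap you yourself flag at the end.

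The paper does \emph{not} integrate the reduced ODE once to obtain your pointwise identity $r^2(u/r)_r=\tfrac{rB^2}{2(2\mu+\lambda)}$, nor does it use two separate weights and a time integration. Instead it multiplies $(2\mu+\lambda)D_r=B(B_r+B/r)$ directly by the single test function $f(r)=R(t)r^\alpha-r^{\alpha+1}$ and integrates over $[0,R(t)]$. The point of this choice is that $f$ vanishes at \emph{both} endpoints, so the boundary term $f(R(t))D(R(t),t)$ that would otherwise carry $u(R(t),t)=\dot R(t)$ is eliminated outright. After one integration by parts the left side becomes $-(2\mu+\lambda)\int_0^{R(t)}(\alpha R(t)r^{\alpha-1}-(\alpha+1)r^\alpha)D\,dr$, which Cauchy--Schwarz bounds by $(2\mu+\lambda)\bigl[\tfrac{\alpha}{\sqrt{2\alpha-2}}+\tfrac{\alpha+1}{\sqrt{2\alpha}}\bigr]R(t)^\alpha\|\nabla\cdot\vec u\|_{L^2}$; the right side, after integrating $\partial_r(B^2/2)$ by parts, becomes $\tfrac{2-\alpha}{2}R(t)\int r^{\alpha-1}B^2\,dr+\tfrac{\alpha-1}{2}\int r^\alpha B^2\,dr$, both terms nonnegative. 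Dropping the second and invoking $\bigl(\int B\,dr\bigr)^2\le\int r^{\alpha-1}B^2\,dr\cdot\tfrac{R(t)^{2-\alpha}}{2-\alpha}$ supplies the second factor of $(2-\alpha)$. This yields a \emph{pointwise-in-$t$} lower bound $\|\nabla\cdot\vec u\|_{L^2}\ge c>0$, and then $(2\mu+\lambda)\int_0^T\|\nabla\cdot\vec u\|_{L^2}^2\,dt\le E_0$ gives \eqref{eq:3.12} in one line.

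Your route, by contrast, produces identities with the uncancelled boundary term $r(t)^{\alpha-1}u(r(t),t)=\tfrac{1}{\alpha}\tfrac{d}{dt}r(t)^\alpha$. After time integration this contributes a constant $\tfrac{1}{\alpha}(r(T)^\alpha-r_0^\alpha)$ independent of $E_0$, which is precisely the ``spurious $M_0^{-2}$ piece'' you worry about; it does not combine into the claimed clean inequality. Even taking the linear combination $r(t)\times(\text{first identity})-(\text{second})$ to cancel the boundary term leaves $\tfrac12\int_0^{r(t)}(r(t)-s)s^{\alpha-1}B^2\,ds$ on the magnetic side, and this weight vanishes at $s=r(t)$, so the Cauchy--Schwarz trick against $\int s^{1-\alpha}\,ds$ no longer applies (the reciprocal is non-integrable). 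The paper's direct test-function approach sidesteps both issues simultaneously.
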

\begin{remark}
	Compared to \cite{Huang-Yan-Xin}, the model we consider is more general, allowing density and pressure to be independent variables. However, to analyze the balance between viscous stress and the magnetic Lorentz force, we require that the initial pressure also vanishes within the interior vacuum region i.e. $	\rho_0(r) = 0,\,P_0(r)=0 $, $\text{for } r \in [0, r_0]$. Moreover, considering the state equation $P=\rho ^{\gamma}\varGamma \left( \tilde{s} \right)$, this definition is entirely natural and reasonable.
\end{remark}
\begin{remark}
	According to \cite{Xin-Yan}, classical solutions inevitably blow up if the initial density contains an isolated mass group, implying the presence of an annular vacuum region. In contrast, our method of proof necessitates that the initial density contains an interior vacuum region; an annular vacuum region does not work for our approach.
\end{remark}
\subsection{Finite time Blow-up for 3D Axisymmetric Cylinders}
We now turn our attention to the solutions of \eqref{1.2} under the assumption of axisymmetry
\begin{equation*}
	\begin{split}
			&\rho (\vec{x},t)=\rho (r,t), \quad P(x,t)=P(r,t),
		\\
		&\vec{u}(x,t)=u(r,t)\frac{(x_1,x_2,0)}{r}+v\left( r,t \right) \frac{\left( -x_2,x_1,0 \right)}{r}+w\left( r,t \right) \left( 0,0,1 \right) ,
		\\
		&\vec{B}(x,t)=\frac{(-x_2,x_1,0)}{r}B(r,t).
	\end{split}
\end{equation*}
Here $(\rho,u,v,w,P,B)$ are scalar functions. Now we consider a domain $\Omega=\{\vec{x}\,| \,x_{1}^{2}+x_{2}^{2}< R_{0}^{2}, x_3 \in \mathbb{T}^1\}$, which represents a cylinder of radius $R_0$ with periodic boundary conditions in the axial direction $x_3$ with period 1.\par
In cylindrically symmetric coordinates, the original system \eqref{1.2} can be written as
\begin{equation}\label{12.16}
	\begin{cases}
			\rho_t + (\rho u)_r + \frac{\rho u}{r} = 0, \\
			\rho \left( u_t + u u_r - \frac{v^2}{r} \right) + P_r = (2\mu + \lambda) \left( u_r + \frac{u}{r} \right)_r - B \left( B_r + \frac{B}{r} \right), \\
			\rho \left( v_t + u v_r + \frac{uv}{r} \right) = \mu \left( v_r + \frac{v}{r} \right)_r, \\
			\rho \left( w_t + u w_r \right) = \mu \frac{1}{r}(r w_r)_r, \\
			P_t + u P_r + \gamma P \left( u_r + \frac{u}{r} \right) = 0, \\
			B_t + (u B)_r = 0.
	\end{cases}
\end{equation}
The initial data are given by
\begin{equation}
	\begin{split}
		\rho \left( r,0 \right) =\rho _0\left( r \right) ,\quad  &u\left( r,0 \right) =u_0\left( r \right) ,\quad v\left( r,0 \right) =v_0\left( r \right), \quad w\left( r,0 \right) =w_0\left( r \right),
		\\
		&P\left( r,0 \right) =P_0\left( r \right) ,\quad B\left( r,0 \right) =B_0(r),
	\end{split}
\end{equation}
and the Dirichlet boundary condition 
\begin{equation}
	u\left( R_0,t \right) =v\left( R_0,t \right)=w\left( R_0,t \right)=0 ,\,t\ge 0.
\end{equation}
Note also that the continuity of the velocity and magnetic field in the center will force
\begin{equation}\label{12.19}
	u(0, t) = v(0,t)=B(0, t) = 0, \quad t \ge 0.
\end{equation}
\begin{theorem}\label{th1.3}(Blow-up of Strong Solutions for 3D Axisymmetric Cylinders)	
	Let $\Omega$ be a cylinder with radius $R_0$ which is periodic in the $x_3$-direction with period 1. Suppose the initial data $(\rho_0, \vec{u}_0, P_0,\vec{B}_0)$ for the initial-boundary value problem \eqref{12.16}-\eqref{12.19} satisfy the regularity conditions
	\begin{equation} \label{eq:33.6}
		\begin{cases}
			(\rho_0, B_0) \in W^{1,q}(\Omega), \quad \rho_0 \ge 0, \\
			\vec{u}_0 \in H^2(\Omega), \quad \vec{u}_0|_{\partial\Omega} = 0,
		\end{cases}
		\quad \text{and} \quad \nabla \cdot \vec{B}_0 = 0 \text{ in } \Omega,
	\end{equation}
	for some constant $3<q \le 6$, along with the compatibility condition
	\begin{equation}
		-\mu \Delta \vec{u}_0-(\mu +\lambda )\nabla (\nabla \cdot \vec{u}_0)+\nabla {P}_0+\vec{B}_0\times (\nabla \times \vec{B}_0)=\rho_0 ^{1/2}\vec{g},\quad \mathrm{in} \Omega ,
	\end{equation}
	for some $\vec{g} \in L^2$. Under these conditions, Theorem \ref{th1} can be extended to guarantee the existence of a unique local strong solution $(\rho, \vec{u}, P, \vec{B})$ on the time interval $[0, T^*]$ for the three-dimensional domain $\Omega=\{\vec{x}\,| \,x_{1}^{2}+x_{2}^{2}< R_{0}^{2}, x_3 \in \mathbb{T}^1\}$.
	
	Assume further that the initial data admit the following cylindrical symmetry
	\begin{equation} 
		\begin{split}
				\rho _0(\vec{x})=\rho _0(r),\quad &\vec{u}_0(\vec{x})=\frac{\left( x_1,x_2,0 \right)}{r}u_0(r)+\frac{\left( -x_2,x_1,0 \right)}{r}v_0\left( r \right) +\left( 0,0,1 \right) w_0\left( r \right) ,
			\\
		 &P_0(\vec{x})=P_0(r),\quad \vec{B}_0(\vec{x})=\frac{(-x_2,x_1,0)}{r}B_0(r).
		\end{split}
	\end{equation}
	Then, the unique strong solution preserves this symmetry, taking the form
	\begin{equation} 
		\begin{split}
			\rho (\vec{x},t)=\rho (r,t),\quad &\vec{u}(\vec{x},t)=\frac{\left( x_1,x_2,0 \right)}{r}u(r,t)+\frac{\left( -x_2,x_1,0 \right)}{r}v\left( r,t \right) +\left( 0,0,1 \right) w\left( r,t \right) ,
			\\
			&P(\vec{x},t)=P(r,t),\quad \vec{B}(\vec{x},t)=\frac{(-x_2,x_1,0)}{r}B(r,t),
		\end{split}
	\end{equation}
	where $\rho, u, P, B$ are scalar functions.
	
	If there exists a radius $r_0 \in (0, R_0)$ such that the initial density and pressure vanish in the inner cylinder $\Omega_1=\{\vec{x}\,| \,x_{1}^{2}+x_{2}^{2}< r_{0}^{2}, x_3 \in \mathbb{T}^1\}$, i.e.
	\begin{equation} \label{eq:33.10}
		\rho_0(r) = 0,\, P_0(r)=0 \quad \text{for } r \in [0, r_0], 
	\end{equation}
	while the magnetic field satisfies the non-degeneracy condition
	\begin{equation} \label{eq:33.11}
		\int_0^{r_0} B_0(r) \, dr \ne 0, 
	\end{equation}
	then the local strong solution cannot exist globally and must blow up in finite time.
	
	Specifically, for any constant $\alpha \in [\frac{7}{6}, 2)$, the lifespan $T$ of the solution is bounded by
	\begin{equation} \label{eq:33.12}
T\le \left( \frac{1}{\sqrt{2\mu +\lambda}R_0}\frac{(2-\alpha )^2\left| \int_0^{r_0}{B_0}(r)\,dr \right|^2}{2\sqrt{2}(\frac{\alpha}{\sqrt{2\alpha -2}}+\frac{\alpha +1}{\sqrt{2\alpha}})} \right) ^{-2}E_0,
	\end{equation}
	where $E_0$ denotes the initial energy
	\begin{equation}
	E_0=\int_0^{R_0}{\left( \frac{\rho_0}{2}\left( u_0^2+v_0^2+w_0^2 \right) r+\frac{P_0}{\gamma -1}r+\frac{1}{2}B_0^2r \right) dr}.
	\end{equation}
\end{theorem}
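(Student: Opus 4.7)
The plan is to follow the 2D strategy of Theorem \ref{Thm 1.2} (after Huang--Yan--Xin), extending it to the cylindrical 3D geometry. I argue by contradiction, assuming that the local strong solution from the 3D version of Theorem \ref{th1} exists for a time $T$ exceeding the bound in \eqref{eq:33.12}. The argument rests on three ingredients: (i) propagation of the interior vacuum region along the radial Lagrangian flow; (ii) conservation of the magnetic flux $\int_0^{r_0(t)} B\,dr$; and (iii) a weighted test-function identity on the radial momentum equation, whose structure on the vacuum region is purely two-dimensional since $\rho v^2/r$ vanishes when $\rho = 0$.

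First, because the continuity equation and the pressure-transport equation in \eqref{12.16} are of pure transport type along the radial characteristic $\eta(r,t)$, the hypotheses $\rho_0 = P_0 = 0$ on $[0,r_0]$ propagate to $\rho(r,t) = P(r,t) = 0$ on $[0, r_0(t)]$, where $r_0(t) := \eta(r_0, t)$. Differentiating $M(t) := \int_0^{r_0(t)} B(r,t)\,dr$ and using the induction equation $B_t + (uB)_r = 0$ together with $u(0,t) = 0$ and $\dot r_0(t) = u(r_0(t), t)$, I obtain $M'(t) \equiv 0$, so $M(t) \equiv M_0 = \int_0^{r_0} B_0\,dr \neq 0$. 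Standard testing of the momentum equation against $\vec{u}$ yields energy conservation; in particular $\int_0^{R_0} B^2 r\,dr$ and $\int_0^{R_0} \rho|\vec u|^2 r\,dr$ are bounded by $2 E_0$ uniformly in time.

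For the main estimate, I multiply the radial momentum equation by $r^\alpha$ with $\alpha \in [7/6, 2)$ and integrate. Using the divergence-form identity $B(B_r + B/r) = r^{-2}\partial_r(\tfrac12 r^2 B^2)$ together with $u|_{r=0,R_0}=0$ and $B|_{r=0}=0$, the Lorentz term contributes
\begin{equation*}
-\int_0^{R_0} B(B_r+B/r)\, r^\alpha\,dr = -\frac{R_0^\alpha B(R_0,t)^2}{2} - \frac{2-\alpha}{2}\int_0^{R_0} B^2 r^{\alpha-1}\,dr,
\end{equation*}
while the swirl $\rho v^2/r$, the convective terms, and the pressure and viscous terms are either absent in the vacuum or bounded by the energy dissipation through Hardy-type inequalities. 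Combining this identity with Cauchy--Schwarz applied to the conserved flux,
\begin{equation*}
|M_0|^2 \leq \Bigl(\int_0^{r_0(t)} B^2 r^{\alpha-1}\,dr\Bigr)\Bigl(\int_0^{r_0(t)} r^{1-\alpha}\,dr\Bigr) \leq \frac{R_0^{2-\alpha}}{2-\alpha} \int_0^{R_0} B^2 r^{\alpha-1}\,dr,
\end{equation*}
yields a lower bound on $\int B^2 r^{\alpha-1}\,dr$ of order $(2-\alpha)|M_0|^2/R_0^{2-\alpha}$. Time-integrating and using the energy-based Cauchy--Schwarz control of $\int \rho u\, r^\alpha\,dr$ leads to an inequality of the form $T\,(2-\alpha)^2|M_0|^2/(R_0\sqrt{2\mu+\lambda}) \lesssim C(\alpha)\sqrt{E_0}$, which upon squaring produces the explicit lifespan bound \eqref{eq:33.12}.

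The main obstacle is the bookkeeping for the extra 3D contributions that are absent from the 2D disk case. The swirl term $-\rho v^2/r$ must be absorbed on $[r_0(t), R_0]$ through Hardy's inequality against the viscous dissipation; both the angular and axial components $v, w$ enter the energy balance through their own equations in \eqref{12.16}; and the periodic axial integration modifies the overall volume weight. This sharpens the admissible range of $\alpha$ to $[7/6, 2)$ (versus $(1,2)$ in 2D) and generates the additional $\sqrt{2}$ factor in the denominator of \eqref{eq:33.12} relative to \eqref{eq:3.12}. A secondary subtlety is controlling the boundary contributions at $r = R_0$ coming from the pressure and the viscous flux: since no boundary datum for $P$ is prescribed, the weight $r^\alpha$ must be chosen precisely so that these boundary terms are either absorbed by dissipation or balanced against the magnetic boundary term $\tfrac{R_0^\alpha B(R_0,t)^2}{2}$ above.
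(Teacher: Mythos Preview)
Your ingredients (i) and (ii)---propagation of the vacuum along the radial flow and conservation of $\int_0^{r_0(t)} B\,dr$---are correct and match the paper. The gap is in (iii): the paper does \emph{not} multiply by $r^\alpha$ and integrate over the full radius $[0, R_0]$. It multiplies the radial momentum equation by the test function $\bigl(R(t)\,r^\alpha - r^{\alpha+1}\bigr)$ and integrates only over the vacuum region $[0, R(t)]$. Since $\rho = P = 0$ there, the convective term, the swirl term $\rho v^2/r$, and the pressure all vanish identically---there is nothing to absorb via Hardy inequalities---and since the test function vanishes at $r = R(t)$, there are no boundary terms whatsoever. What remains is a clean balance between the viscous term and the Lorentz term, yielding a pointwise-in-time lower bound $\|\nabla\cdot\vec{u}\|_{L^2} \geq c/R(t) \geq c/R_0$, which then contradicts the energy dissipation bound. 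Your scheme of integrating over $[0, R_0]$ with weight $r^\alpha$ instead produces a boundary contribution at $r = R_0$ of the form $R_0^\alpha\bigl[(2\mu+\lambda)(u_r+u/r) - P - \tfrac12 B^2\bigr](R_0,t)$, for which no sign information is available; the proposed remedy of ``choosing $\alpha$ precisely'' cannot help, since all terms carry the same weight at the same endpoint.

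Two further misattributions: the restriction $\alpha \geq 7/6$ does not come from Hardy inequalities on the swirl term but from the 3D Sobolev embedding $H^2 \hookrightarrow W^{1,6}$, which limits the local integrability of $r^{\alpha-1}\nabla\cdot\vec{u}$ near $r = 0$ needed to justify the integration by parts (in 2D one has $H^2 \hookrightarrow W^{1,p}$ for all finite $p$, whence the full range $\alpha \in (1,2)$). The extra $\sqrt{2}$ in \eqref{eq:33.12} relative to \eqref{eq:3.12} comes from the paper's use of the pointwise inequality $2(u_r^2 + u^2/r^2) \geq (u_r + u/r)^2$ when passing from the dissipation form $(2\mu+\lambda)\int(ru_r^2+u^2/r)\,dr$ in the 3D energy identity to $\|\nabla\cdot\vec{u}\|_{L^2}^2$, not from the periodic axial integration.
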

\begin{remark}
	We provide examples of blow-up in the three-dimensional case. In fact, the axisymmetric case corresponds exactly to that of a two-dimensional radially symmetric velocity field with swirl. Similarly, it can be proven that the equations governing a two-dimensional velocity field with swirl will also blow up in finite time.
\end{remark}
\begin{remark}
Since the initial data for the system \eqref{12.16} depend only on $r$, the solution is independent of $x_3$ and thus automatically satisfies the periodic boundary condition in the $x_3$-direction. Consequently, the existence of a local strong solution for the cylinder with radius $R_0$ (periodic in $x_3$) follows as a direct corollary of the existence theory for three-dimensional domains.
\end{remark}
\begin{remark}
We require $\alpha \in [\frac{7}{6}, 2)$ since, in the three-dimensional case, we only have the embedding $H^2 \hookrightarrow W^{1,6}$.
\end{remark}
\subsection{Finite time Blow-up for free boundary problem}
With the local existence and blow-up results for fixed boundary established, we now turn our attention to the dynamic behavior of the system with a free boundary. Specifically, we investigate the evolution of a radially symmetric strong solution governed by a free boundary condition. In this case, the fluid domain is time-dependent, denoted by $\Omega_t = \{x : 0 \le |x| \le a(t)\}$, where the boundary radius $a(t)$ is driven by the fluid velocity. The following theorem establishes that, even in this free boundary framework, finite-time singularity formation is inevitable under the presence of an interior vacuum and a non-trivial magnetic field.
\begin{theorem}\label{thm 1.4}
	(Blow-up of free boundary problem) Assume that $(\rho,\vec{u},P,\vec{B})$ is a 2D radially symmetric strong solution \textup{(}as defined in \eqref{11.15}\textup{)} of \eqref{11.16} with the initial data \eqref{11.19}  and the boundary condition
	\begin{equation}\label{11.25}
		u\left( 0,t \right) =B\left( 0,t \right) =0, \quad \left( \frac{B^2}{2}+P-\left( 2\mu +\lambda \right) \left( u_r+\frac{u}{r} \right) \right) \left( a\left( t \right) ,t \right) =0,
	\end{equation}
where $a(t)$ satisfies 
\begin{equation}\label{11.26}
	a'\left( t \right) =u\left( a\left( t \right) ,t \right) , \quad a(0)=a_0,
\end{equation}
corresponding to the free surface of $\Omega_t$, where 
$$\Omega _t\overset{\mathrm{def}}{=}\left\{ x\,|\,0\le \left| x \right|\le a\left( t \right) \right\}.$$
Furthermore, if there exists a constant $r_0 \in (0,a(0))$ such that 
\begin{equation}
	\rho_0(r) = 0,\,P_0(r)=0 \quad \text{for } r \in [0, r_0], 
\end{equation}
and 
	\begin{equation}
	\int_0^{r_0} B_0(r) \, dr \ne 0,
\end{equation}
then the strong solution $(\rho,\vec{u},P,\vec{B})$ will blow up in finite time. Moreover, for any constant $\alpha \in (1,2)$, the lifespan $T$ of the solution is bounded by
\begin{equation}
	T\le \exp \left( E_0\left( \frac{\left| \int_0^{r_0}{B_0}(r)\,dr \right|^2(2-\alpha )^2}{2\sqrt{2\mu +\lambda}C\left[ \frac{\alpha}{\sqrt{2\alpha -2}}+\frac{\alpha +1}{\sqrt{2\alpha}} \right]} \right) ^{-2} \right)-1,
\end{equation}
where $C$ is a constant depending on $a_0,E_0,\mu,\lambda$.
\end{theorem}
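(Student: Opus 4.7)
The plan is to adapt the blow-up mechanism of Theorem \ref{Thm 1.2} to the free-boundary setting; the essential new ingredient is an a priori control on the growth of the outer boundary $a(t)$, which replaces the fixed constant $R_0$ appearing in the fixed-boundary bound. Let $r(t)$ denote the Lagrangian trajectory with $r'(t) = u(r(t),t)$ and $r(0) = r_0$. The transport equations for $\rho$ and $P$ ensure the inner vacuum $\{0 \le r \le r(t)\}$ is preserved, so $\rho \equiv P \equiv 0$ there. Combining the induction equation $B_t + (uB)_r = 0$ with $u(0,t) = 0$ yields conservation of the magnetic flux $\int_0^{r(t)} B(r,t)\,dr = \int_0^{r_0} B_0(r)\,dr =: M \ne 0$. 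In the vacuum, the radial momentum equation reduces to $(2\mu+\lambda)(u_r + u/r)_r = B(B_r + B/r)$; multiplying by $r^2$ and integrating from $0$ (using $B(0,t) = u(0,t) = 0$) produces the structural identity
\begin{equation*}
B^2(r,t) = 2(2\mu+\lambda)\,r\,(u/r)_r(r,t), \qquad r \in [0,r(t)].
\end{equation*}

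Reynolds' transport theorem combined with the free-boundary condition \eqref{11.25} (which makes the total-energy flux through $\partial\Omega_t$ vanish) yields the dissipation identity $E(t) + \int_0^t D(s)\,ds = E_0$, where $D(t) := \int_0^{a(t)}[2\mu(u_r^2 + (u/r)^2) + \lambda(u_r+u/r)^2]\,r\,dr$. From $u(r,t)^2 = 2\int_0^r u u_s\,ds$ and Cauchy-Schwarz, using $\int u_r^2 r\,dr$, $\int(u/r)^2 r\,dr \le D(t)/(2\mu)$, I derive the uniform trace estimate $|u(r,t)|^2 \le D(t)/\mu$ for every $r \in [0, a(t)]$. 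Applied at $r = a(t)$ this gives $|a'(t)|^2 \le D(t)/\mu$; Cauchy-Schwarz in time together with $\int_0^t D \le E_0$ then produces the linear-in-time growth bound
\begin{equation*}
a(t)^2 \le C_1(1+t), \qquad C_1 = C_1(a_0, E_0, \mu).
\end{equation*}

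For the core estimate, I use $|B| = \sqrt{2(2\mu+\lambda)\,r\,(u/r)_r}$ in the vacuum and weighted Cauchy-Schwarz with weight $r^{\alpha-1}$ for $\alpha \in (1,2)$:
\begin{equation*}
M^2 \le 2(2\mu+\lambda)\,\frac{r(t)^{2-\alpha}}{2-\alpha}\int_0^{r(t)} r^{\alpha}(u/r)_r\,dr.
\end{equation*}
Integrating the remaining integral by parts produces a boundary term $r(t)^{\alpha-1}u(r(t),t)$ and a volume term $-\alpha\int_0^{r(t)} r^{\alpha-2}u\,dr$; the former is estimated by the trace bound, and the latter by a second weighted Cauchy-Schwarz combined with Hardy's inequality. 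Together these yield
\begin{equation*}
M^2 \le \frac{2C\sqrt{2\mu+\lambda}}{(2-\alpha)^2}\Bigl[\frac{\alpha}{\sqrt{2\alpha-2}} + \frac{\alpha+1}{\sqrt{2\alpha}}\Bigr]\,a(t)\sqrt{D(t)},
\end{equation*}
equivalently $D(t) \ge c(\alpha,\mu,\lambda)\,M^4/a(t)^2$. Substituting $a(t)^2 \le C_1(1+t)$ and integrating, $E_0 \ge \int_0^T D\,dt \ge (cM^4/C_1)\log(1+T)$, which, on solving for $T$, delivers the stated exponential lifespan bound. The main technical obstacle is the last displayed inequality: obtaining the precise constant $\alpha/\sqrt{2\alpha-2} + (\alpha+1)/\sqrt{2\alpha}$ requires two distinct weighted Cauchy-Schwarz applications (one for the boundary term, one for the volume term) with weights carefully tuned so that all singular integrals at $r = 0$ converge across the full range $\alpha \in (1,2)$. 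The genuinely new step beyond Theorem \ref{Thm 1.2} is the trace-based estimate $a(t)^2 \le C_1(1+t)$; this linear growth is precisely what produces the factor $\log(1+T)$ underlying the exponential structure of the lifespan bound.
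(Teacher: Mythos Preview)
Your overall architecture matches the paper's exactly: energy identity under the free-boundary condition, the growth bound $a(t)\le C(1+t)^{1/2}$ via a trace estimate on $u(a(t),t)$ and Cauchy--Schwarz in time, a lower bound for the dissipation of order $M^4/a(t)^2$ extracted from the vacuum momentum balance together with magnetic-flux conservation, and integration to produce $\log(1+T)\lesssim E_0$. The one substantive difference is how you obtain the core lower bound. The paper simply re-runs the fractional-moment test function $f(r)=R(t)r^\alpha-r^{\alpha+1}$ from Theorem~\ref{Thm 1.2}: testing the reduced equation $(2\mu+\lambda)(\nabla\cdot\vec u)_r=B(B_r+B/r)$ against $f$, integrating by parts once, and bounding $\int_0^{R(t)} r^{\alpha-1}|\nabla\cdot\vec u|\,dr$ and $\int_0^{R(t)} r^{\alpha}|\nabla\cdot\vec u|\,dr$ by Cauchy--Schwarz against $\|\nabla\cdot\vec u\|_{L^2}$ produces precisely the constants $\alpha/\sqrt{2\alpha-2}$ and $(\alpha+1)/\sqrt{2\alpha}$. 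Your route through the pointwise identity $B^2=2(2\mu+\lambda)\,r\,(u/r)_r$ (which is correct and rather elegant) gives, after your integration by parts, a boundary term $r(t)^{\alpha-1}u(r(t),t)$ and a volume term $-\alpha\int r^{\alpha-2}u\,dr$; controlling these via the trace bound and Hardy does yield an exponential lifespan bound of the same form, but the specific combination $\alpha/\sqrt{2\alpha-2}+(\alpha+1)/\sqrt{2\alpha}$ does not emerge from that decomposition, so your claim to recover the exact constant is not substantiated.

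Two smaller corrections. First, with the boundary condition as stated (vanishing of $\tfrac12 B^2+P-(2\mu+\lambda)(u_r+u/r)$ at $r=a(t)$), the dissipation that actually falls out of the energy computation is $(2\mu+\lambda)\int_0^{a(t)}(u_r+u/r)^2 r\,dr=(2\mu+\lambda)\|\nabla\cdot\vec u\|_{L^2}^2$, not your $D(t)=\int[2\mu(u_r^2+(u/r)^2)+\lambda(u_r+u/r)^2]\,r\,dr$; the two differ by the boundary contribution $2\mu\,u^2(a(t),t)$, and your form corresponds to a full-stress free-boundary condition rather than the one in \eqref{11.25}. Second, the paper's trace estimate is more direct and does not need separate control of $\int u_r^2 r$ and $\int (u/r)^2 r$: from $u_r^2 r+u^2/r\ge 0$ one has pointwise $2uu_r\le(u_r+u/r)^2 r$, so $u^2(a(s),s)=\int_0^{a(s)}2uu_r\,dr\le\int_0^{a(s)}(u_r+u/r)^2 r\,dr$, which integrates in time to $E_0/(2\mu+\lambda)$.
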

\begin{remark}
Physically, this boundary condition requires the effective viscous flux with the magnetic field, $F=P+\frac{1}{2}|B|^2-(2\mu + \lambda) \operatorname{div} u$, to vanish at the boundary. Under this condition, we are able to prove the fundamental energy inequality.
\end{remark}
\begin{remark}
	It is important to note that the local well-posedness of strong solutions for this free boundary problem remains an open question. The primary difficulty lies in the fact that existing literature typically considers fluids that are strictly bounded away from vacuum or physical vacuum models where the density vanishes only at the boundary. In contrast, our model involves the presence of an interior vacuum. Consequently, the blow-up result presented here is a conditional one.
\end{remark}
\begin{remark}
    When the pressure law is simplified to $P=A\rho^{\gamma}$ , corresponding to the model studied in \cite{Huang-Yan-Xin}, this conclusion remains valid. In this case, the pressure naturally vanishes in vacuum regions.
\end{remark}
\subsection{Sketch of the proof}
We will prove the aforementioned theorems in three sections. In Section 3, we prove the blow-up of strong solutions by using a fractional moment argument. In the vacuum region, we initially obtain not the balance between viscous stress and the magnetic Lorentz force, but rather
\begin{equation}
	(2\mu + \lambda) \partial_r \nabla \cdot \vec{u} = B \left( B_r + \frac{B}{r} \right)+P_r \quad \text{a.e.}
\end{equation} 
Since we define $P_0(r)=0$ when $\rho_0(r) = 0$ for $r \in [0, r_0]$. The particle path starting from $r_0$ satisfies the particle trajectory equation 
$$\begin{cases}
		\frac{\partial \vec{X}}{\partial t}(x,t) = \vec{u}(X(x,t), t), \\
		\vec{X}(r_0,0) =r_0.
\end{cases}$$
Let $R(t)$ be a particle path at time $t$ in the region of interior vacuum starting from point $r_0$. 
By the uniqueness of particle paths and the transport equation representation for density and pressure, we obtain
\begin{equation}
	\rho = 0, P=0 \text{ on } [0, R(t)], \quad R(0) = r_0.
\end{equation}
Therefore, in the vacuum region, we arrive at 
\begin{equation}
	(2\mu + \lambda) \partial_r \nabla \cdot \vec{u} = B \left( B_r + \frac{B}{r} \right) \quad \text{a.e.}
\end{equation}
Consequently, we can employ the test function method used in \cite{Huang-Yan-Xin} to obtain a lower bound of $\|\nabla \cdot \vec{u}\|_{L^2}$, thereby deriving finite time blow-up. In Section 4, we extend the result to the free boundary problem. First, we prove that the free boundary problem, under appropriate boundary conditions, satisfies energy conservation. Next, we fully utilize the temporal growth estimate of the free boundary under radially symmetric conditions. Then, based on the fact that the inner vacuum boundary cannot overtake the outer free boundary, we obtain a time-weighted estimate of $\|\nabla \cdot \vec{u}\|_{L^2}$. We find that the growth rate of the free boundary required for the solution to blow up is precisely critical
\begin{equation}
	\int_0^T{\frac{1}{1+t}}\,dt\lesssim \int_0^T{\frac{1}{a^2\left( t \right)}}\,dt\lesssim \int_0^T{\left\| \nabla \cdot \vec{u} \right\| _{L^2}^{2}}\,dt\lesssim E_0\,\,\Rightarrow \,\,T<\infty ,
\end{equation}
due to 
\begin{equation}
	a(t) \le C(1+t)^{1/2}.
\end{equation}
In Section 5, we establish the local well-posedness of System \eqref{1.2}. We prove the existence of local strong solutions by employing linearization of the equations and iterative techniques. This guarantees the existence of the local solutions considered for fixed boundary.
\section{Finite time blow-up for fixed boundary}\label{s4}
	\subsection{Dynamic behavior in the vacuum region}
	By Theorem \ref{th1}, let $(\rho, \vec{u},P,\vec{B})$ be the local radially symmetric strong solution to the problem \eqref{11.16}.
	 This solution satisfies the following regularities
	\begin{equation*}
		(\rho, P,\vec{B}) \in C([0,T]; W^{1,q}), \quad \vec{u} \in C([0,T]; H^2) \cap L^2(0,T; W^{2,q}).
	\end{equation*}
Given that $\nabla \vec{u} \in L^2(0,T; L^{\infty})$ due to the Sobolev embedding, we obtain a unique family of particle trajectories $\vec{X}(x,t)$ satisfying
\begin{equation} \label{eq:3.15}
	\begin{cases}
		\frac{\partial \vec{X}}{\partial t}(x,t) = \vec{u}(X(x,t), t), \\
		\vec{X}(x,0) = x.
	\end{cases}
\end{equation}
 Note that the initial data satisfy 
	$$\rho_0(r) = 0,\,P_0(r)=0 \quad \text{for } r \in [0, r_0]. $$
	
Let $R(t)$ be a particle path at time $t$ in the region of interior vacuum starting from point $r_0$. Due to the transport equation representation
\begin{equation}
	\begin{split}
		\rho \left( \vec{X}\left( x,t \right) ,t \right) =\rho _0\left( x \right) \exp \left\{ \int_0^t{\left( -\nabla \cdot \vec{u}\left( \vec{X}\left( x,s \right) ,s \right) ds \right)} \right\} ,
		\\
		P\left( \vec{X}\left( x,t \right) ,t \right) =P_0\left( x \right) \exp \left\{ \gamma \int_0^t{\left( -\nabla \cdot \vec{u} \left( \vec{X}\left( x,s \right) ,s \right) ds \right)} \right\} ,
	\end{split}
\end{equation}
and the uniqueness of particle paths, we have
	\begin{equation} \label{3.3}
		\rho = 0, P=0 \text{ on } [0, R(t)], \quad R(0) = r_0.
	\end{equation}
Consequently, we also have
\begin{equation}\label{3.4}
	P_r=0 \text{ on } [0, R(t)].
\end{equation}
	Then, in the vacuum region $[0, R(t)]$, the momentum equation in $\eqref{11.16}_2$ becomes
	\begin{equation} \label{3.5}
		(2\mu + \lambda) \left( u_r + \frac{u}{r} \right)_r = (2\mu + \lambda) \partial_r \nabla \cdot \vec{u} = B \left( B_r + \frac{B}{r} \right) \quad \text{a.e.}
	\end{equation}
	\subsection{Conservation law for the magnetic field}
	To deal with the equation \eqref{3.5}, a crucial observation is the conservation of $B$ within the vacuum region, specifically
	\begin{equation}\label{3.6}
		 \left| \int_0^{R(t)} B dr \right| = \left| \int_0^{R(0)=r_0} B_0 dr \right| = C_0 > 0.
	 \end{equation}
To verify this, we recall the regularity of $\vec{u}$ and $\vec{B}$ given 
\begin{equation*}
	\vec{u} \in C([0,T]; H^2) \subset C([0,T] \times \Omega), \quad \vec{B} \in C([0,T]; W^{1,q}) \subset C([0,T] \times \Omega),
\end{equation*}
which implies
\begin{equation*}
	u \in C([0,T] \times \Omega), \quad B \in C([0,T] \times \Omega).
\end{equation*}
Integrating the fourth equation of \eqref{11.16} over the interval $[0, R(t)]$ yields
\begin{equation}\label{3.7}
	\int_0^{R(t)} B_t dr = \frac{d}{dt} \left( \int_0^{R(t)} B dr \right) - R'(t) B(R(t), t).
\end{equation}
We note that the boundary $R(t)$ satisfies $R'(t) = u(R(t), t), R(0) = r_0.$
Furthermore, we have the identity
\begin{equation} \label{3.8}
	\int_0^{R(t)} (uB)_r dr = u(R(t), t) B(R(t), t).
\end{equation}
Combining \eqref{3.7}-\eqref{3.8}, we obtain
\begin{equation} \label{eq:3.41}
	\frac{d}{dt} \left( \int_0^{R(t)} B dr \right) = 0,
\end{equation}
which confirms the conservation law \eqref{3.6}.
	\subsection{Proof of Theorem \ref{Thm 1.2}}
To establish an upper bound on the lifespan of strong solutions, we employ a fractional moment argument on \eqref{3.5}. This approach allows us to identify a critical balance between viscous stress and the magnetic Lorentz force within the vacuum disk.\par
	The main idea to prove the above claim is to find a suitable multiplier for \eqref{3.5}. Indeed, for any $1 < \alpha < 2$, multiplying \eqref{3.5} by $(R(t) r^{\alpha} - r^{\alpha+1})$ gives
	\begin{equation}
		(2\mu + \lambda) (R(t) r^{\alpha} - r^{\alpha+1}) \partial_r \nabla \cdot \vec{u} = (R(t) r^{\alpha} - r^{\alpha+1}) \left( \partial_r \frac{B^2}{2} + \frac{B^2}{r} \right) \quad \text{a.e.}
	\end{equation}
By integrating the above equation over $[0, R(t)]$, we obtain
	\begin{equation} \label{3.20}
		(2\mu + \lambda) \int_0^{R(t)} (R(t) r^{\alpha} - r^{\alpha+1}) \partial_r \nabla \cdot \vec{u} dr = \int_0^{R(t)} (R(t) r^{\alpha} - r^{\alpha+1}) \left( \partial_r \frac{B^2}{2} + \frac{B^2}{r} \right) dr.
	\end{equation}
On the one hand, integrating the left-hand side of \eqref{3.20} by parts, we arrive at 
\begin{equation} \label{3.12}
(2\mu +\lambda )\int_0^{R(t)}{(}R\left( t \right) r^{\alpha}-r^{\alpha +1})\partial _r \nabla \cdot \vec{u}dr=-(2\mu +\lambda )\int_0^{R(t)}{(}\alpha R\left( t \right) r^{\alpha -1}-(\alpha +1)r^{\alpha}) \nabla \cdot \vec{u}dr.
\end{equation}
To prove that \eqref{3.12} holds for the strong solution $(\vec{u}, \vec{B})$, it is necessary to verify the regularity of the integrand. Since $\vec{u} \in C([0,T]; H^2)$ and $\partial_r \nabla \cdot \vec{u} = \nabla (\nabla \cdot \vec{u}) \cdot \frac{\vec{x}}{r}$, this implies that
	\begin{equation} \label{eq:3.23}
		\| r^{\frac{1}{2}} \partial_r \nabla \cdot \vec{u} \|_{L^2(0,R(t))} \le C\| \nabla (\nabla \cdot \vec{u}) \|_{L^2(\Omega)} \le C.
	\end{equation}
We rewrite the integrand on the left-hand side of \eqref{3.12} as
	\begin{equation} \label{eq:3.24}
		(R(t) r^{\alpha} - r^{\alpha+1}) \partial_r \nabla \cdot \vec{u} = (R(t) r^{\alpha-\frac{1}{2}} - r^{\alpha+\frac{1}{2}}) (r^{\frac{1}{2}} \partial_r \nabla \cdot \vec{u}) \in L^{\infty}((0,T); L^2(0,R)).
	\end{equation}
	On the other hand, the regularity $\vec{u} \in C([0,T]; H^2)$ ensures that $\nabla \cdot \vec{u} \in C([0,T]; L^p)$ for any $p \in (2, \infty)$, which implies
	\begin{equation} \label{eq:3.25}
		\| r^{\frac{1}{p}} \nabla \cdot \vec{u} \|_{L^p(0,R(t))} \le C\| \nabla \cdot \vec{u} \|_{L^p(\Omega)}.
	\end{equation}
	Thereby, taking $p$ large enough such that $\frac{1}{p} < \alpha - 1$, the integrand on the right-hand side of \eqref{3.12} can be rewritten as
	\begin{equation}\label{33.16}
		\begin{split}
				(\alpha R(t) &r^{\alpha-1} - (\alpha+1)r^{\alpha}) \nabla \cdot \vec{u}
				\\ 
			&= (\alpha R(t) r^{\alpha-1-\frac{1}{p}} - (\alpha+1)r^{\alpha-\frac{1}{p}}) (r^{\frac{1}{p}} \nabla \cdot \vec{u}) \in L^{\infty}((0,T); L^p(0,R(t))).
		\end{split}
	\end{equation}
Consequently, equation \eqref{3.12} can be verified using a standard smooth approximation argument. Specifically, for any $\epsilon > 0$, we split the integral into $\int_0^{\epsilon} + \int_{\epsilon}^R$, and obtain \eqref{3.12} by taking the limit as $\epsilon \to 0$.
	
	Therefore, the left-hand side of \eqref{3.12} can be bounded by
	\begin{equation} \label{3.27}
		\begin{aligned}
		\text{LHS}=&\left( 2\mu +\lambda \right) \left( \alpha R(t)\int_0^{R(t)}{r^{\alpha -1}}|\nabla \cdot\vec{u}|dr+(\alpha +1)\int_0^{R(t)}{r^{\alpha}}|\nabla \cdot\vec{u}|dr \right) 
		\\
		\le& \left( 2\mu +\lambda \right) \alpha R(t)\parallel \nabla \cdot \vec{u}\parallel _{L^2}\left( \int_0^{R(t)}{|}r^{\alpha -2}|^2rdr \right) ^{\frac{1}{2}}
		\\
		&+\left( 2\mu +\lambda \right) (\alpha +1)\parallel \nabla \cdot \vec{u}\parallel _{L^2}\left( \int_0^{R(t)}{|}r^{\alpha -1}|^2rdr \right) ^{\frac{1}{2}}
		\\
		=&\left( 2\mu +\lambda \right) \alpha R(t)\parallel \nabla \cdot \vec{u}\parallel _{L^2}\frac{1}{\sqrt{2\alpha -2}}{R(t})^{\alpha -1}
		\\
		&+\left( 2\mu +\lambda \right) (\alpha +1)\parallel \nabla \cdot \vec{u}\parallel _{L^2}\frac{1}{\sqrt{2\alpha}}R(t)^{\alpha}
		\\
		=&\left( 2\mu +\lambda \right) \left[ \frac{\alpha}{\sqrt{2\alpha -2}}+\frac{\alpha +1}{\sqrt{2\alpha}} \right] R(t)^{\alpha}\parallel \nabla \cdot \vec{u}\parallel _{L^2}.
		\end{aligned}
	\end{equation}
We estimate the right-hand side of \eqref{3.12} in a similar manner
	\begin{equation} \label{33.19}
		\begin{aligned}
			\text{RHS} &= \int_0^{R(t)} (R(t) r^{\alpha} - r^{\alpha+1}) \left( \partial_r \frac{B^2}{2} + \frac{B^2}{r} \right) dr \\
			&= -\int_0^{R(t)} \frac{B^2}{2} (R(t) \alpha r^{\alpha-1} - (\alpha+1)r^{\alpha}) dr + \int_0^{R(t)} B^2 (R(t) r^{\alpha-1} - r^{\alpha}) dr \\
			&= \int_0^{R(t)} \left( 1 - \frac{\alpha}{2} \right) B^2 R(t) r^{\alpha-1} dr + \int_0^{R(t)} \left( \frac{\alpha+1}{2} - 1 \right) B^2 r^{\alpha} dr \\
			&= \frac{2-\alpha}{2} \int_0^{R(t)} B^2 R(t) r^{\alpha-1} dr + \frac{\alpha-1}{2} \int_0^{R(t)} B^2 r^{\alpha} dr \\
			&\ge \frac{2-\alpha}{2} \int_0^{R(t)} B^2 R(t) r^{\alpha-1} dr. 
		\end{aligned}
	\end{equation}
	The identity \eqref{33.19} remains valid for the strong solution $\vec{B}$ via the same approximation procedure. The key step involves establishing the following regularity
	\begin{equation} \label{eq:3.30}
		(R(t) r^{\alpha} - r^{\alpha+1}) \partial_r \frac{B^2}{2} = (R(t) r^{\alpha-\frac{1}{q}} - r^{\alpha+1-\frac{1}{q}}) (r^{\frac{1}{q}} \partial_r \frac{B^2}{2}) \in L^{\infty}(0,T; L^q(0,R(t))).
	\end{equation}
To prove \eqref{eq:3.30}, we use the embedding $\vec{B} \in C([0,T]; W^{1,q}) \hookrightarrow C([0,T] \times \Omega)$, together with $B = \vec{B} \cdot \frac{x^{\perp}}{r}$ and $| \nabla \vec{B}|^2=B_{r}^{2}+\frac{B^2}{r^2}$. Consequently, we obtain
	\begin{equation} \label{eq:3.32}
		\| r^{\frac{1}{q}} \partial_r \frac{B^2}{2} \|_{L^q(0,R(t))} \le \| \vec{B} \|_{L^{\infty}(\Omega)} \| \nabla \vec{B} \|_{L^q(\Omega)} \le C.
	\end{equation}
and
	\begin{equation}
		B^2 r^{\alpha-1} + B^2 r^{\alpha} \le C \in L^{\infty}((0,T) \times \Omega).
	\end{equation}
This verifies the validity of the integration by parts performed in \eqref{33.19}.
\par	
	Substituting \eqref{3.27} and \eqref{33.19} into \eqref{3.20}, we obtain
	\begin{equation} \label{eq:3.34}
		\frac{2-\alpha}{2} \int_0^{R(t)} B^2 R(t) r^{\alpha-1} dr \le (2\mu + \lambda) \left[ \frac{\alpha}{\sqrt{2\alpha-2}} + \frac{\alpha+1}{\sqrt{2\alpha}} \right] R(t)^{\alpha} \| \nabla \cdot \vec{u}\|_{L^2}.
	\end{equation}
	Since we have the following inequality
	\begin{equation} \label{eq:3.42}
		\begin{aligned}
			C_0^2 = \left| \int_0^{R(t)} B dr \right|^2 &\le \int_0^{R(t)} B^2 r^{\alpha-1} dr \int_0^{R(t)} r^{1-\alpha} dr \\
			&= \int_0^{R(t)} B^2 r^{\alpha-1} dr \frac{1}{2-\alpha} R(t)^{2-\alpha},
		\end{aligned}
	\end{equation}
due to the condition $1 < \alpha < 2$.
\par	
Combining \eqref{eq:3.34} with \eqref{eq:3.42}, we deduce that
	\begin{equation} \label{eq:3.43}
		\begin{aligned}
			(2\mu + \lambda) \left[ \frac{\alpha}{\sqrt{2\alpha-2}} + \frac{\alpha+1}{\sqrt{2\alpha}} \right] R(t)^{\alpha} \|\nabla \cdot \vec{u}\|_{L^2} &\ge \frac{2-\alpha}{2} R(t) \int_0^{R(t)} B^2 r^{\alpha-1} dr \\
			&\ge \frac{2-\alpha}{2} R(t) C_0^2 (2-\alpha) R(t)^{\alpha-2} \\
			&= \frac{(2-\alpha)^2}{2} C_0^2 R(t)^{\alpha-1},
		\end{aligned}
	\end{equation}
	which implies
	\begin{equation} \label{eq:3.44}
		(2\mu + \lambda)  \|\nabla \cdot \vec{u}\|_{L^2} \ge \frac{C_0^2 (2-\alpha)^2}{2R(t) \left[ \frac{\alpha}{\sqrt{2\alpha-2}} + \frac{\alpha+1}{\sqrt{2\alpha}} \right]} > 0.
	\end{equation}
	Utilizing the fact that $R(t) \le R_0$, we obtain a lower estimate for $\|\nabla \cdot \vec{u}\|_{L^2}$
	\begin{equation} \label{eq:3.45}
		\| \nabla \cdot \vec{u}\|_{L^2} \ge \frac{1}{(2\mu + \lambda) R_0} \frac{C_0^2 (2-\alpha)^2}{2 \left[ \frac{\alpha}{\sqrt{2\alpha-2}} + \frac{\alpha+1}{\sqrt{2\alpha}} \right]} > 0.
	\end{equation}
Recalling the a priori energy estimates
\begin{equation}
	\frac{1}{2}\left( \| \sqrt{\rho}\vec{u} \| _{L^{\infty}L^2}^{2}+\| \vec{B} \| _{L^{\infty}L^2}^{2} \right) +\| \frac{P}{\gamma -1} \| _{L^{\infty}L^1}+(2\mu +\lambda )\int_0^T{\| \nabla \cdot \vec{u}\| _{L^2}^{2}dt}\le E_0.
\end{equation}
	here $E_0$ is the initial energy. Thus, the lifespan of the strong solution must satisfy
	\begin{equation}
		T \le \left( \frac{1}{\sqrt{2\mu + \lambda} R_0} \frac{C_0^2 (2-\alpha)^2}{2 \left[ \frac{\alpha}{\sqrt{2\alpha-2}} + \frac{\alpha+1}{\sqrt{2\alpha}} \right]} \right)^{-2} E_0.
	\end{equation}
	This finishes the proof of Theorem \ref{Thm 1.2}.
	\subsection{Proof of Theorem \ref{th1.3}}
	\begin{proof}
	The proof for the three-dimensional cylindrically symmetric case is similar to that of the two-dimensional radially symmetric case. Here we focus only on the behavior of equation \eqref{12.16}$_2$ within the interior vacuum region. Since we similarly obtain $P_r=0$ a.e. on $[0, R(t)]$, we have the following on $[0, R(t)]$
	
	\begin{equation}
		(2\mu +\lambda )\left( u_r+\frac{u}{r} \right) _r=B\left( B_r+\frac{B}{r} \right) \quad \mathrm{a}.\mathrm{e}.
	\end{equation}
We multiply the equation by the same multiplier $(R(t) r^{\alpha} - r^{\alpha+1})$ and perform the subsequent procedure. However, it should be noted that $\vec{u} \in C([0,T]; H^2)$ ensures that $\nabla \cdot \vec{u} \in C([0,T]; L^6)$ in the 3D case.
Therefore, we only have 
\begin{equation}
	\| r^{\frac{1}{6}} \nabla \cdot \vec{u} \|_{L^6(0,R(t))} \le C\| \nabla \cdot \vec{u} \|_{L^6(\Omega)}.
\end{equation}
Compared with \eqref{33.16}, to obtain
\begin{equation}
		\begin{split}
		(\alpha R(t) &r^{\alpha-1} - (\alpha+1)r^{\alpha}) \nabla \cdot \vec{u}
		\\ 
		&= (\alpha R(t) r^{\alpha-1-\frac{1}{6}} - (\alpha+1)r^{\alpha-\frac{1}{6}}) (r^{\frac{1}{6}} \nabla \cdot \vec{u}) \in L^{\infty}((0,T); L^6(0,R(t))),
	\end{split}
\end{equation}
it is necessary to assume $\alpha \ge \frac{7}{6}$.
	Similarly, applying $R(t) \le R_0$ yields a lower bound for $\|\nabla \cdot \vec{u}\|_{L^2}$
\begin{equation} \label{eq:33.45}
	\| \nabla \cdot \vec{u}\|_{L^2} \ge \frac{1}{(2\mu + \lambda) R_0} \frac{C_0^2 (2-\alpha)^2}{2 \left[ \frac{\alpha}{\sqrt{2\alpha-2}} + \frac{\alpha+1}{\sqrt{2\alpha}} \right]} > 0.
\end{equation}
Since we have $2\left( u_{r}^{2}+\frac{u^2}{r^2} \right) \ge \left( u_r+\frac{u}{r} \right) ^2=\left| \nabla \cdot \vec{u} \right|^2$ and the basic energy inequality
	\begin{equation}
\int_0^T{\int_0^{R_0}{[}}(2\mu +\lambda )(ru_{r}^{2}+\frac{u^2}{r})+\mu (rv_{r}^{2}+\frac{v^2}{r})+\mu rw_{r}^{2}]drdt\le E_0,
	\end{equation}
 the lifespan of the strong solution is consequently bounded by
\begin{equation}
	T \le \left( \frac{1}{\sqrt{2\mu + \lambda} R_0} \frac{C_0^2 (2-\alpha)^2}{2\sqrt{2} \left[ \frac{\alpha}{\sqrt{2\alpha-2}} + \frac{\alpha+1}{\sqrt{2\alpha}} \right]} \right)^{-2} E_0,
\end{equation}
for $\alpha \in [\frac{7}{6}, 2)$.
\end{proof}
\section{Finite time Blow-up for free boundary problem}
\subsection{Basic energy estimates}
Before proving the blowup result, we first derive the basic energy inequality under free boundary conditions. Here, our proof framework is formulated in radially symmetric coordinates.
\begin{lemma}\label{lemma 4.1}
	 Assume that $(\rho,\vec{u},P,\vec{B})$ is a symmetric strong solution of the \eqref{11.16} with the initial data \eqref{11.19}  and the boundary condition \eqref{11.25}. Then it holds that \textup{(}Neglecting the constant factor $2\pi$
\textup{)}
\begin{equation}\label{44.1}
	\begin{split}
		E_0&=\int_0^{a(0)}{\left( \frac{1}{2}\rho _0u_{0}^{2}r+\frac{1}{\gamma -1}P_0r+\frac{1}{2}B_{0}^{2}r \right)}dr
		\\
	&=\int_0^{a(t)}{\left( \frac{1}{2}\rho u^2r+\frac{1}{\gamma -1}Pr+\frac{1}{2}B^2r \right)}dr+(2\mu +\lambda )\int_0^t{\int_0^{a(t)}{\left( u_r+\frac{u}{r} \right) ^2}drd\tau}
	\\
	&=\frac{1}{2}\int_{\Omega _t}{\left( \rho |\vec{u}|^2+|\vec{B}|^2 \right)}dx+\int_{\Omega _t}{\frac{P}{\gamma -1}}dx+(2\mu +\lambda )\int_0^t{\left\| \nabla \cdot \vec{u} \right\| _{L^2}^{2}d\tau}.
	\end{split}
\end{equation}
\end{lemma}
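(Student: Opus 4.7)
My approach is to derive three separate evolution identities — for the kinetic, thermal, and magnetic pieces of the energy — on the moving interval $[0,a(t)]$, combine them with the viscous dissipation identity coming from the momentum equation, and observe that every boundary contribution at $r=a(t)$ collapses into the Bernoulli-type combination annihilated by \eqref{11.25}. The two central tools are the Leibniz rule for a moving endpoint,
$$\frac{d}{dt}\int_0^{a(t)} f(r,t)\,dr = \int_0^{a(t)} f_t(r,t)\,dr + f(a(t),t)\,a'(t),$$
combined with the kinematic condition $a'(t)=u(a(t),t)$ from \eqref{11.26}, and the radial identity $(u_r+u/r)\,r = (ur)_r$, which makes every radial-weight integration by parts clean and produces matching boundary contributions.

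First I would multiply the momentum equation in \eqref{11.16} by $ur$ and use the continuity equation to rewrite the convective term in conservative form; applying the Leibniz rule then generates a boundary term $\tfrac12\rho u^3 r|_{r=a(t)}$ which exactly cancels the convective flux at $r=a(t)$, leaving
$$\frac{d}{dt}\,\frac{1}{2}\!\int_0^{a(t)}\!\!\rho u^2 r\,dr = -\!\int_0^{a(t)}\!\!u P_r r\,dr + (2\mu+\lambda)\!\int_0^{a(t)}\!\!u\big(u_r+\tfrac{u}{r}\big)_{\!r}\, r\,dr - \!\int_0^{a(t)}\!\!u B\big(B_r+\tfrac{B}{r}\big) r\,dr.$$
A parallel computation using the pressure transport equation in \eqref{11.16} and the identity $(ur)_r=(u_r+u/r)r$ yields $\tfrac{d}{dt}\int \tfrac{P}{\gamma-1} r\,dr = -\int_0^{a(t)} P(u_r+u/r)\,r\,dr$, where the Leibniz and integration-by-parts boundary terms at $r=a(t)$ cancel against each other. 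Writing the induction equation $B_t + (uB)_r = 0$ in conservative form and repeating the procedure gives $\tfrac{d}{dt}\tfrac12\int_0^{a(t)} B^2 r\,dr = \int_0^{a(t)} u B(B_r+B/r) r\,dr - \tfrac12 u B^2 r\big|_{r=a(t)}$. One further integration by parts on the viscous term produces the boundary value $(2\mu+\lambda)\,u\,(u_r+u/r)\,r\big|_{r=a(t)}$ together with the dissipation $-(2\mu+\lambda)\int (u_r+u/r)^2 r\,dr$.

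Summing these four identities and using $(ur)_r = (u_r+u/r)r$ once more to rewrite $\int uP_r r\,dr = Pur|_{r=a(t)} + \tfrac{d}{dt}\int\tfrac{P}{\gamma-1}r\,dr$ and $\int uB(B_r+B/r)r\,dr = \tfrac12 uB^2 r|_{r=a(t)} + \tfrac{d}{dt}\tfrac12\int B^2 r\,dr$, the three boundary contributions at $r=a(t)$ collapse into
$$-\,u\,r\left[\,\tfrac{B^2}{2}+P-(2\mu+\lambda)\Big(u_r+\tfrac{u}{r}\Big)\right]_{r=a(t)} = 0$$
by the free-boundary condition \eqref{11.25}. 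The boundary contributions at $r=0$ vanish since $u(0,t)=B(0,t)=0$ and every integrand is $O(r)$ near the origin. Integrating in time from $0$ to $t$ then delivers the identity \eqref{44.1}.

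The main challenge I expect is purely one of systematic bookkeeping: three distinct sources of boundary terms at $r=a(t)$ — the Leibniz boundary term from differentiating under the moving endpoint, the convective flux terms appearing when the momentum, pressure, and induction equations are rewritten in conservative form, and the integration-by-parts boundary terms — must be tracked through the kinetic, thermal, magnetic, and viscous identities and shown to reassemble into exactly the stress-free combination annihilated by \eqref{11.25}, with nothing left over. The regularity recorded in \eqref{11.15} is comfortably sufficient to legitimize every integration by parts and boundary trace at $r=a(t)$, so no additional approximation argument is required beyond the direct calculation above.
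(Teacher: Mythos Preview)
Your proposal is correct and follows essentially the same approach as the paper: multiply the momentum equation by $ur$, the pressure transport equation by $r$ (equivalently, derive the thermal identity directly), and the induction equation by $Br$, then sum so that the boundary terms at $r=a(t)$ combine into the stress-free quantity $\tfrac{B^2}{2}+P-(2\mu+\lambda)(u_r+u/r)$ annihilated by \eqref{11.25}. Your bookkeeping of the Leibniz, convective-flux, and integration-by-parts boundary contributions is exactly what the paper does, and your inclusion of the $r$-weight in the dissipation integral $\int (u_r+u/r)^2 r\,dr$ is in fact the correct form (consistent with the $\|\nabla\cdot\vec u\|_{L^2}^2$ expression in the final line of \eqref{44.1}).
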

\begin{proof}
Multiplying $\eqref{11.16}_2$ by $ur$ and integrating by parts, we get
\begin{equation}\label{4.1}
	\begin{split}
		\frac{1}{2}&\frac{d}{dt}\int_0^{a(t)}{\rho}u^2rdr-\int_0^{a(t)}{P}\left( u_r+\frac{u}{r} \right) rdr+(2\mu +\lambda )\int_0^{a(t)}{\left( u_r+\frac{u}{r} \right) ^2}dr
		\\
		&-\left( 2\mu +\lambda \right) \left( uu_rr+u^2\right) \left( a\left( t \right) ,t \right) +\left( Pur \right) \left( a\left( t \right) ,t \right) +\int_0^{a(t)}{(}uB^2+uBB_rr)dr=0.
	\end{split}
\end{equation}
multiplying $\eqref{11.16}_3$ by $r$ and integrating by parts, we arrive at
\begin{equation}\label{4.2}
	\frac{d}{dt}\int_0^{a\left( t \right)}{Prdr}-\int_0^{a\left( t \right)}{Pu_rrdr}-\int_0^{a\left( t \right)}{Pudr}+\gamma \int_0^{a\left( t \right)}{\left( Pu_rr+Pu \right) dr}=0.
\end{equation}
Multiplying $\eqref{11.16}_4$ by $Br$ and integrating by parts yields
\begin{equation}\label{4.3}
	\begin{aligned}
	\int_0^{a(t)}{B_t}Brdr+\int_0^{a(t)}{(}uB)_rBrdr=&\frac{1}{2}\frac{d}{dt}\int_0^{a(t)}{B^2}rdr-\frac{1}{2}uB^2(t,a(t))a(t)
	\\
	&+\left( uB^2 \right) \left( t,a(t) \right) a(t)-\int_0^{a(t)}{(}uB^2+uBB_rr)dr
	\\
	=&\frac{1}{2}\frac{d}{dt}\int_0^{a(t)}{B^2}rdr+\frac{1}{2}\left( uB^2r \right) \left( a\left( t \right) ,t \right) 
	\\
	&-\int_0^{a(t)}{(}uB^2+uBB_rr)dr=0.
	\end{aligned}
\end{equation}
In deriving \eqref{4.1}--\eqref{4.3}, we applied the condition \eqref{11.25} and \eqref{11.26}. 
Now adding \eqref{4.1} to \eqref{4.2} multiplied by $\frac{1}{\gamma-1}$, and  adding \eqref{4.3} yields
\begin{equation}
	\frac{d}{dt}\int_0^{a(t)}{\left( \frac{1}{2}\rho u^2r+\frac{1}{\gamma -1}Pr+\frac{1}{2}B^2r \right)}dr+(2\mu +\lambda )\int_0^{a(t)}{\left( u_r+\frac{u}{r} \right) ^2}dr=0,
\end{equation}
where we have used
\begin{equation}
	\left( \frac{B^2}{2}+P-\left( 2\mu +\lambda \right) \left( u_r+\frac{u}{r} \right) \right) \left( a\left( t \right) ,t \right) =0.
\end{equation} 	
This concludes the proof of Lemma \ref{lemma 4.1}.
\end{proof}
\subsection{Estimates for the free boundary radius}

In this subsection, we derive a crucial upper bound for the growth of the free boundary radius $a(t)$, following \cite{Li-Zhang}.  This estimate provides control over the expansion of the fluid domain and plays a pivotal role in the subsequent blow-up analysis.

\begin{lemma}\label{lem:boundary_estimate}
	Under the assumptions of Theorem \ref{thm 1.4}, the free boundary $a(t)$ satisfies the following growth estimate
	\begin{equation}\label{eq:boundary_growth}
		a(t) \le C(1+t)^{\frac{1}{2}}, \quad \text{for all } t \ge 0,
	\end{equation}
	where $C$ is a positive constant depending on $a_0,E_0,\mu,\lambda$.
\end{lemma}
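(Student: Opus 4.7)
The plan is to translate the ODE $a'(t)=u(a(t),t)$ into an integral of $\nabla\cdot\vec{u}$ over the fluid domain, and then to match the sharp $t^{1/2}$ scaling by combining Cauchy--Schwarz with the dissipation bound from Lemma \ref{lemma 4.1}.

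First I would exploit the radial identity $r\,\nabla\cdot\vec{u}=(ru)_r$ together with the center condition $u(0,t)=0$. Integrating over $[0,a(t)]$ gives
$$
a(t)\,u(a(t),t)=\int_0^{a(t)}(ru)_r\,dr=\int_0^{a(t)} r\,\nabla\cdot\vec{u}\,dr.
$$
Applying Cauchy--Schwarz with the weight $r\,dr$ (which, up to the $2\pi$ factor neglected in Lemma \ref{lemma 4.1}, is the $L^2(\Omega_t)$ inner product) yields
$$
|a(t)\,u(a(t),t)|\le\Bigl(\int_0^{a(t)} r\,dr\Bigr)^{1/2}\Bigl(\int_0^{a(t)} r(\nabla\cdot\vec{u})^2\,dr\Bigr)^{1/2}=\frac{a(t)}{\sqrt{2}}\,\|\nabla\cdot\vec{u}\|_{L^2(\Omega_t)}.
$$
Dividing by $a(t)>0$ gives the pointwise estimate
$$
|a'(t)|\le \frac{1}{\sqrt{2}}\,\|\nabla\cdot\vec{u}\|_{L^2(\Omega_t)}.
$$

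Next I would integrate in time and invoke Cauchy--Schwarz in $t$ together with the energy identity $(2\mu+\lambda)\int_0^t\|\nabla\cdot\vec{u}\|_{L^2}^2\,d\tau\le E_0$ from Lemma \ref{lemma 4.1}:
$$
a(t)\le a_0+\frac{1}{\sqrt{2}}\int_0^t\|\nabla\cdot\vec{u}\|_{L^2}\,d\tau\le a_0+\frac{t^{1/2}}{\sqrt{2}}\Bigl(\int_0^t\|\nabla\cdot\vec{u}\|_{L^2}^2\,d\tau\Bigr)^{1/2}\le a_0+\sqrt{\frac{E_0}{2(2\mu+\lambda)}}\,t^{1/2}.
$$
The elementary inequality $a_0+c\,t^{1/2}\le \sqrt{2}\max(a_0,c)\,(1+t)^{1/2}$ (which follows from $(\sqrt{t}-1)^2\ge 0$) then yields \eqref{eq:boundary_growth} with $C=C(a_0,E_0,\mu,\lambda)$.

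I do not anticipate a substantive obstacle; the argument reduces to a direct dissipation-plus-Cauchy--Schwarz computation in the radial variable. The one delicate point is that the resulting rate $a(t)\le C(1+t)^{1/2}$ is exactly the \emph{critical} one highlighted in the sketch of proof: it is precisely what renders $\int_0^T a(t)^{-2}\,dt$ logarithmically divergent in $T$, and this logarithmic divergence is what drives the subsequent lifespan estimate. Any loss of a $t^{1/2}$ factor via a cruder bound would destroy the final blow-up argument, so using the sharp Cauchy--Schwarz in the weight $r\,dr$ is essential.
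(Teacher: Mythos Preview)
Your argument is correct and yields the stated bound with an explicit constant $C=C(a_0,E_0,\mu,\lambda)$; in fact your route gives the slightly sharper estimate $a(t)\le a_0+\sqrt{E_0/(2(2\mu+\lambda))}\,t^{1/2}$.

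The paper's proof follows the same overall architecture --- represent $u(a(t),t)$ via the fundamental theorem of calculus in $r$, then Cauchy--Schwarz in $t$, then the dissipation bound from Lemma~\ref{lemma 4.1} --- but the intermediate identity is different. The paper writes $u^2(a(t),t)=\int_0^{a(t)}2uu_r\,dr$ and then observes that $2uu_r\le r\bigl(u_r+\tfrac{u}{r}\bigr)^2$ pointwise (since the difference is $ru_r^2+u^2/r\ge 0$), which feeds directly into the dissipation integral. You instead use the radial divergence identity $(ru)_r=r\,\nabla\cdot\vec{u}$ and Cauchy--Schwarz in the weight $r\,dr$. Your route is marginally cleaner because it ties the boundary velocity to $\|\nabla\cdot\vec{u}\|_{L^2(\Omega_t)}$ in one step rather than two, and it avoids discarding the nonnegative remainder $ru_r^2+u^2/r$ --- hence the extra factor $1/\sqrt{2}$ in your constant. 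Both approaches are equally robust for the downstream application, since only the exponent $1/2$ (not the constant) matters for the logarithmic divergence driving the blow-up.
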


\begin{proof}
	Recall that the evolution of the free boundary is governed by the condition $a'(t) = u(a(t), t)$. Integrating this relation with respect to time from $0$ to $t$, we can express the boundary radius as
	\begin{equation}
		a(t) = a_0 + \int_0^t a'(s) \, ds = a_0 + \int_0^t u(a(s), s) \, ds.
	\end{equation}
	To estimate the integral term, we apply the Cauchy-Schwarz inequality
	\begin{equation}
		\int_0^t |u(a(s), s)| \, ds \le \left( \int_0^t 1 \, ds \right)^{\frac{1}{2}} \left( \int_0^t |u(a(s), s)|^2 \, ds \right)^{\frac{1}{2}} = t^{\frac{1}{2}} \left( \int_0^t u^2(a(s), s) \, ds \right)^{\frac{1}{2}}.
	\end{equation}
	Next, we estimate the boundary velocity term $\int_0^t u^2(a(s), s) \, ds$. Using the boundary condition $u(0, t) = 0$, we have
	\begin{equation}
		u^2(a(s), s) = \int_0^{a(s)} \partial_r (u^2(r, s)) \, dr = \int_0^{a(s)} 2 u(r, s) u_r(r, s) \, dr.
	\end{equation}

The boundary term is bounded directly by the viscous dissipation term appearing in the energy identity. Specifically
	\begin{equation}
		\int_0^t u^2(a(s), s) \, ds \le \int_0^t \int_0^{a(s)} 2 u u_r \, dr ds \le \int_0^t \int_0^{a(s)} \left( u_r + \frac{u}{r} \right)^2 r \, dr ds.
	\end{equation}
	From the global energy identity \eqref{44.1}, we know that the total viscous dissipation is bounded by the initial energy $E_0$
	\begin{equation}
		(2\mu + \lambda) \int_0^t \int_0^{a(s)} \left( u_r + \frac{u}{r} \right)^2 r \, dr ds \le E_0.
	\end{equation}
	Therefore, we have
	\begin{equation}
		\int_0^t{u^2}(a(s),s)\,ds\le \frac{E_0}{2\mu +\lambda}.
	\end{equation}
	Combining these estimates, we arrive at
	\begin{equation}
	a(t)\le a_0+t^{\frac{1}{2}}\left( \frac{E_0}{2\mu +\lambda} \right) ^{\frac{1}{2}}\le C(1+t)^{\frac{1}{2}},
	\end{equation}
	where $C$ is dependent on $a_0,E_0,\mu,\lambda$. This completes the proof of Lemma \ref{lem:boundary_estimate}.
\end{proof}
\subsection{Proof of Theorem \ref{thm 1.4}}
\begin{proof}
	In this case, we are dealing with two distinct moving interfaces: the inner vacuum boundary, denoted by $R(t)$; and the outer free surface, denoted by $a(t)$. The condition $\nabla \vec{u} \in L^2(0,T; L^{\infty})$ guarantees the uniqueness and smoothness of the particle trajectories, which implies that particle paths cannot intersect. Consequently, the inner vacuum boundary can never overtake the outer free surface. Therefore, we have
	\begin{equation}
		R(t) \le a(t), \quad \text{for all } t \in [0, T).
	\end{equation}
	Combining this with the growth estimate for the free boundary radius derived in Lemma \ref{lem:boundary_estimate}, we obtain a uniform upper bound for the vacuum radius
	\begin{equation} \label{eq:R_bound}
		R(t) \le a(t) \le C(1+t)^{\frac{1}{2}}.
	\end{equation}
	
	Next, we analyze the dynamics within the vacuum region. As established in the fixed boundary case, the density and pressure vanish in $[0, R(t)]$, reducing the momentum equation to a balance between viscous stress and the Lorentz force
	\begin{equation}
		(2\mu + \lambda) \partial_r \nabla \cdot \vec{u} = B \left( B_r + \frac{B}{r} \right) \quad \text{a.e. in } (0, R(t)).
	\end{equation}
	Notice that the interior vacuum boundary $R(t)$ behaves essentially the same way as in the fixed boundary case discussed in Section \ref{s4}. Therefore, we can employ the identical fractional moment test function, $f(r) = R(t) r^{\alpha} - r^{\alpha+1}$ with $1 < \alpha < 2$. By repeating the integration and estimation procedures over the interval $[0, R(t)]$, we get the lower bound for the $\|\nabla \cdot \vec{u}\|_{L^2}$
	\begin{equation}
		\|\nabla \cdot \vec{u}\|_{L^2} \ge \frac{1}{(2\mu+\lambda)R(t)} \frac{C_0^2(2-\alpha)^2}{2\left[\frac{\alpha}{\sqrt{2\alpha-2}} + \frac{\alpha+1}{\sqrt{2\alpha}}\right]},
	\end{equation}
	where the constant $C_0$ represents the conserved total magnetic flux in the vacuum region, as defined in \eqref{3.6}.
	
	A critical step in the free boundary analysis is to relate this lower bound to the time variable $t$. Utilizing the geometric constraint \eqref{eq:R_bound}, we can replace $R(t)$ with its upper bound $C(1+t)^{1/2}$. This yields the following time-dependent lower estimate for the viscous dissipation
	\begin{equation} \label{eq:div_lower_bound}
		\begin{aligned}
			\|\nabla \cdot \vec{u}\|_{L^2} &\ge \frac{1}{(2\mu+\lambda)a(t)} \frac{C_0^2(2-\alpha)^2}{2\left[\frac{\alpha}{\sqrt{2\alpha-2}} + \frac{\alpha+1}{\sqrt{2\alpha}}\right]} \\
			&\ge \frac{1}{(2\mu+\lambda)C(1+t)^{\frac{1}{2}}} \frac{C_0^2(2-\alpha)^2}{2\left[\frac{\alpha}{\sqrt{2\alpha-2}} + \frac{\alpha+1}{\sqrt{2\alpha}}\right]}.
		\end{aligned}
	\end{equation}
	 We recall the global energy conservation law \eqref{44.1}, which implies the integrability of the viscous dissipation over time
	\begin{equation}
		(2\mu + \lambda) \int_0^T \|\nabla \cdot \vec{u} \|_{L^2}^2 \, dt \le E_0.
	\end{equation}
	Substituting the lower bound \eqref{eq:div_lower_bound} into this energy inequality, we obtain
	\begin{equation}
		(2\mu +\lambda)\left( \frac{C_{0}^{2}(2-\alpha )^2}{2(2\mu +\lambda )C \left[ \frac{\alpha}{\sqrt{2\alpha -2}}+\frac{\alpha +1}{\sqrt{2\alpha}} \right]} \right) ^2 \int_0^T \frac{1}{1+t} \, dt \le E_0.
	\end{equation}
	Computing the time integral yields a logarithmic term $\log(1+T)$. Rearranging the inequality provides an explicit upper bound for the lifespan $T$ of the strong solution
	\begin{equation}
		\log\mathrm{(}1+T)\le E_0\left( \frac{C_{0}^{2}(2-\alpha )^2}{2\sqrt{2\mu +\lambda}C\left[ \frac{\alpha}{\sqrt{2\alpha -2}}+\frac{\alpha +1}{\sqrt{2\alpha}} \right]} \right) ^{-2}.
	\end{equation}
We conclude that the maximal existence time is finite
	\begin{equation}
T\le \exp \left( E_0\left( \frac{C_{0}^{2}(2-\alpha )^2}{2\sqrt{2\mu +\lambda}C\left[ \frac{\alpha}{\sqrt{2\alpha -2}}+\frac{\alpha +1}{\sqrt{2\alpha}} \right]} \right) ^{-2} \right) -1.
	\end{equation}
	This confirms that the strong solution must blow up in finite time.
\end{proof}
 	\section{Local well-posedness}\label{s2}
 	In this section, We provide the proof of the local well-posedness stated in Theorem \ref{th1}.
\subsection{Linearized problem}
We assume that $\Omega$ is a bounded domain in $\mathbb{R} ^2$ with smooth boundary and first prove the local existence of strong solutions with positive densities to the linearized equation. Furthermore, we derive uniform estimates that are independent of the initial density's lower bound. These crucial estimates will be applied in the proof of the existence of strong solutions with nonnegative densities.
Consider the following linearized problem for 
\begin{equation}\label{2.1}
	\begin{cases}
		\rho _t+\nabla \cdot \left( \rho v \right) =0,\\
		\left( \rho u \right) _t+\nabla \cdot \left( \rho v\otimes u \right) -\mu \Delta u-\left( \mu +\lambda \right) \nabla \left( \nabla \cdot u \right) +\nabla P=\left( \nabla \times B \right) \times B,\\
		P_t+v\cdot \nabla P+\gamma P\nabla \cdot v=0,\\
		B_t-\nabla \times \left( v\times B \right) =0,\\
		\nabla \cdot B=0,\\
	\end{cases}
\end{equation}
with the initial boundary conditions
\begin{equation}\label{2.2}
	\begin{cases}
		\left( \rho , \rho u, P, B \right) |_{t=0}=\left( \rho _0, \rho _0u_0, P_0, B_0 \right) ,\\
		u=0 \,\,\,\mathrm{on} \,\partial \Omega .\\
	\end{cases}
\end{equation}
Here $\rho_0, u_0, P_0, B_0$ satisfy the following regularity conditions
\begin{equation}\label{2.3}
	\begin{cases}
		0\le \rho _0,P_0\in W^{1,q}, B_0\in W^{1,q}, \nabla \cdot B_0=0 \,\mathrm{in} \,\Omega ,\\
		u_0\in H^2, u_0|_{\partial \Omega}=0,\\
	\end{cases}
\end{equation}
for some $q>2$ and the additional conditions
\begin{equation}\label{2.44}
	\begin{cases}
		\rho _{0}^{1/2}g=-\mu \Delta u_0-\left( \mu +\lambda \right) \nabla \left( \nabla \cdot u_0 \right) +\nabla P_0+B_0\times \left( \nabla \times B_0 \right) , \mathrm{for}\, \mathrm{some}\, g\in L^2,\\
		\rho _0\ge \delta >0,\\
		2+\left\| \left( \rho _0,B_0,P_0 \right) \right\| _{W^{1,q}}+\left\| u_0 \right\| _{H^2}+\left\| g \right\| _{L^2}^{2}<c_0.\\
	\end{cases}
\end{equation}
The given function $v$ satisfies
\begin{equation}\label{2.55}
	v(0)=u_0, v|_{\partial \Omega}=0,
\end{equation}
and the following regularity conditions
\begin{equation}\label{2.5}
	v\in C\left( \left[ 0,T^* \right] ;H^2 \right) \cap L^2\left( 0,T^*;W^{2,q} \right) ,v_t\in L^2\left( 0,T^*;H^1 \right).
\end{equation}
Moreover, $v$ is required to meet the following bounds 
\begin{equation}\label{2.6}
	\left\| v \right\| _{L^{\infty}\left( 0,T^*;H^1 \right)}+\kappa ^{-1}\left\| v \right\| _{L^{\infty}\left( 0,T^*;H^2 \right)}+\left\| v_t \right\| _{L^2\left( 0,T^*;H^1 \right)}+\left\| v \right\| _{{L^2}\left( 0,T^*;W^{2,q} \right)}\le c_1.
\end{equation}
The above fixed positive constants $c_0$, $c_1$, $\kappa$, and $T^*$ satisfy
\begin{equation}
	1<c_0<c_1<c_2\overset{\mathrm{def}}{=} \kappa c_1,\, \mathrm{and}\, 0<T^*<\infty .
\end{equation}
We begin by establishing an existence result for the case of positive initial densities.
\begin{lemma}\label{lemma 2.1}
	Let $\Omega$ be a bounded domain in $\mathbb{R} ^2$ with a smooth boundary. Assume that the initial data $(\rho_{0}, u_0, B_0, P_0)$ satisfy the regularity condition \eqref{2.3} and \eqref{2.44}. Suppose further that the function $v$ satisfies condition \eqref{2.55}, \eqref{2.5} and \eqref{2.6}.
	Then the initial-boundary value problem \eqref{2.1}, \eqref{2.2} admits a unique strong solution with the property that
	\begin{equation}\label{2.8}
		\begin{cases}
			\left( \rho ,P,B \right) \in C\left( \left[ 0,T_{*} \right] ; W^{1,q} \right) , \left( \rho _t,P_t,B_t \right) \in C\left( \left[ 0,T_{*} \right] ; L^q \right) ,\,\,\\
			u\in C\left( \left[ 0,T_{*} \right] ;H^2 \right) \cap L^2\left( 0,T_{*};W^{2,q} \right) , u_t\in L^2\left( 0,T_{*};H^1 \right) ,\\
			\sqrt{\rho}u_t\in L^{\infty}\left( 0,T_{*};L^2 \right) ,\\
		\end{cases}
	\end{equation}
	where $T_*=T_3$ is defined in \eqref{2.42}.
	
	Moreover, there exists a constant $C$ such that
	\begin{equation}
		\begin{cases}\label{2.100}
			\left\| \rho \right\| _{L^{\infty}W^{1,q}}+\left\| \rho _t \right\| _{L^{\infty}L^q}+\left\| P \right\| _{L^{\infty}W^{1,q}}+\left\| P_t \right\| _{L^{\infty}L^q}+\left\| B \right\| _{L^{\infty}W^{1,q}}+\left\| B_t \right\| _{L^{\infty}L^q}\le Cc^2_2,\\
			\left\| \sqrt{\rho}u_t \right\| _{L^{\infty}L^2}\le Cc_{0}^{\frac{5}{2}},\\
			\left\| u \right\| _{L^{\infty}H^1}+\kappa ^{-1}\left\| u \right\| _{L^{\infty}H^2}+\int_0^{T_*}{\left\| u_t\left( s \right) \right\| _{H^1}^{2}+\left\| u\left( s \right) \right\| _{W^{2,q}}^{2}ds}\le c_1,\\
		\end{cases}
	\end{equation}
	provided 
	\begin{equation}\label{2.111}
		c_1\overset{\mathrm{def}}{=}3Cc_{0}^{7},c_2\overset{\mathrm{def}}{=}27C^4c_{0}^{\frac{37}{2}},\kappa \overset{\mathrm{def}}{=}9C^3c_{0}^{\frac{23}{2}}.
	\end{equation}
\end{lemma}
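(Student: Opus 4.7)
The plan is to solve the linearized system in two stages. Since the transport equations for $\rho$, $P$, and $B$ are decoupled from $u$ (their only coefficient is the given velocity $v$), I would first solve them by the method of characteristics, using that $v\in C([0,T^*];H^2)$ and $H^2\hookrightarrow W^{1,\infty}$ in two dimensions. Standard transport estimates then give
\begin{equation*}
\|(\rho,P,B)(t)\|_{W^{1,q}}\le \|(\rho_0,P_0,B_0)\|_{W^{1,q}}\exp\!\Bigl(C\!\int_0^t\!\|v\|_{W^{2,q}}\,ds\Bigr),
\end{equation*}
together with the analogous bounds for the time derivatives via the equations themselves, and preservation of the positive lower bound $\rho(t)\ge \delta\,e^{-C\int_0^t\|\nabla v\|_{L^\infty}\,ds}$. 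Shrinking $T^*$ so the exponential factors are $\mathcal O(1)$ yields the first line of \eqref{2.100} and keeps $\rho$ bounded below, so the momentum equation remains uniformly parabolic.

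With $(\rho,P,B)$ now known, the momentum equation becomes a linear Lamé problem with variable coefficients,
\begin{equation*}
\rho u_t+\rho v\cdot\nabla u-\mu\Delta u-(\mu+\lambda)\nabla(\nabla\cdot u)=-\nabla P+(\nabla\times B)\times B,\qquad u|_{\partial\Omega}=0,\quad u(0)=u_0,
\end{equation*}
which I would solve by a Galerkin approximation. Testing with $u$ and with $u_t$ produces the basic $L^\infty_tL^2_x$ and $L^2_tH^1_x$ estimates; then viewing the spatial part as the stationary Lamé operator with source $\rho(u_t+v\cdot\nabla u)+\nabla P-(\nabla\times B)\times B$ and applying elliptic regularity in $L^2$ and $L^q$ yields $u\in L^\infty(0,T^*;H^2)\cap L^2(0,T^*;W^{2,q})$ and $u_t\in L^2(0,T^*;H^1)$.

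The delicate point is to produce the bound $\|\sqrt{\rho}\,u_t\|_{L^\infty L^2}\le Cc_0^{5/2}$ with a constant \emph{independent} of $\delta$, since this is what will allow the eventual passage $\delta\to0^+$ to solutions with vanishing density. I would differentiate the momentum equation in time, test against $u_t$, and obtain
\begin{equation*}
\tfrac{1}{2}\tfrac{d}{dt}\!\int\!\rho|u_t|^2+\mu\!\int\!|\nabla u_t|^2+(\mu+\lambda)\!\int\!|\nabla\cdot u_t|^2=\text{l.o.t.\ in }\rho_t,v_t,P_t,B_t.
\end{equation*}
The initial value $\sqrt{\rho_0}\,u_t(0)$ is finite in $L^2$ precisely because of \eqref{2.44}: evaluating the momentum equation at $t=0$ and using the compatibility condition (with $v(0)=u_0$) gives $\sqrt{\rho_0}\,u_t(0)=-g-\sqrt{\rho_0}\,u_0\cdot\nabla u_0$, whose $L^2$ norm is controlled by $\|g\|_{L^2}+Cc_0^{3/2}$, uniformly in $\delta$. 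A Grönwall argument on the differentiated identity, absorbing the lower-order terms using the transport estimates already obtained, then yields the desired $c_0^{5/2}$ bound.

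The main obstacle is the careful bookkeeping of the lifespan $T_*=T_3$ and of the constants $c_1,c_2,\kappa$ in \eqref{2.111}, so that the output $u$ produced by the scheme satisfies the \emph{same} bound $c_1$ that was imposed on the input $v$ in \eqref{2.6} — this will be what makes the subsequent fixed-point iteration (in the full proof of Theorem \ref{th1}) contract. Concretely I would define three decreasing subintervals $[0,T_1]\supset[0,T_2]\supset[0,T_3]$ on which each of the three lines of \eqref{2.100} closes in turn, and take $T_*=T_3$; the particular powers of $c_0$ chosen in \eqref{2.111} are tuned so that each Grönwall step absorbs its lower-order terms without forcing $c_1$ or $\kappa$ to blow up as $\delta\to0$. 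Uniqueness follows from a standard energy estimate on the difference of two solutions (which does rely on $\rho>0$, but that is acceptable here since the uniform-in-$\delta$ control of the solution has already been achieved).
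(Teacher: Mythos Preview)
Your proposal is correct and follows essentially the same approach as the paper: first solve the decoupled transport equations for $\rho,P,B$ with the Gr\"onwall-type $W^{1,q}$ bound and the lower bound $\rho\ge C^{-1}\delta$, then differentiate the momentum equation in time and test against $u_t$, using the compatibility condition \eqref{2.44} together with $v(0)=u_0$ to control $\|\sqrt{\rho}u_t(0)\|_{L^2}$ uniformly in $\delta$, and finally recover $H^2$ and $W^{2,q}$ regularity from the stationary Lam\'e system. The paper carries out exactly this scheme, with the specific choices $T_1=c_1^{-2}$, $T_2=\min(T^*,T_1,c_1^{-2}c_2^{-10})$, $T_3=\min(T_2,c_0^{-4}c_1^{-4}c_2^{-2})$ and the powers in \eqref{2.111} tuned so that the output bound on $u$ reproduces the input bound $c_1$ on $v$.
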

\begin{proof}
	The existence of a unique solution $\rho$ was demonstrated by Diperna and Lions \cite{DiPerna-Lions}, a result also noted in Lemma 3 of \cite{Cho-Choe-Kim}. It follows from (2.11) in \cite{Cho-Choe-Kim} that 
	\begin{equation}
		\left\| \rho \left( t \right) \right\| _{W^{1,q}}\le \left\| \rho _0 \right\| _{W^{1,q}}\exp \left( C\int_0^t{\left\| \nabla v\left( s \right) \right\| _{W^{1,q}}ds} \right) .
	\end{equation}
	Note that
	\begin{equation}\label{2.12}
		\int_0^{T_1}{\left\| \nabla v\left( s \right) \right\| _{W^{1,q}}ds}\le T_{1}^{\frac{1}{2}}\left( \int_0^{T_1}{\left\| \nabla v\left( s \right) \right\| _{W^{1,q}}^{2}ds} \right) ^{\frac{1}{2}}\le Cc_1T_{1}^{\frac{1}{2}}\le C,
	\end{equation}
	provided 
	\begin{equation}
		T_1\overset{\mathrm{def}}{=}c_{1}^{-2}<1.
	\end{equation}
	We concluded that for $0\le t\le \min(T_1,T^*)$, 
	$$\left\| \rho \left( t \right) \right\| _{W^{1,q}}\le Cc_0,$$
	and 
	\begin{equation*}
		\begin{split}
			\left\| \rho _t\left( t \right) \right\| _{L^q}&=\left\| \nabla \cdot \left( \rho v \right) \right\| _{L^q}\le C\left( \left\| \rho \right\| _{W^{1,q}}\left\| v \right\| _{W^{1,q}} \right) 
			\\
			&\le Cc_0\left\| v \right\| _{H^2}\le Cc_0c_2.
		\end{split}
	\end{equation*}
	As a consequence of the transport equation representation, we have 
	\begin{equation}
		\rho \left( t,x \right) \ge \delta \exp \left[ -\int_0^t{\left\| \nabla v\left( s \right) \right\| _{L^{\infty}}ds} \right].
	\end{equation}
	Therefore, for $0\le t\le \min(T_1,T^*)$, it holds that 
	$$C^{-1}\delta \le \rho \left( t,x \right) \le Cc_0.$$
	\par
	The next step is to derive the regularity of $B$ for the linearized system \eqref{2.1}.
	Since 
	\begin{equation}\label{2.18}
		B_t+\left( v\cdot \nabla \right) B-\left( B\cdot \nabla \right) v+B\nabla \cdot v=0,
	\end{equation}
	multiplying by $\left| B \right|^{q-2}B$ and integrating by parts, we obtain 
	\begin{equation}\label{2.19}
		\begin{split}
			\frac{d}{dt}\int{\left| B \right|^qdx}&\le C\int{\left| \nabla v \right|\left| B \right|^qdx}\le C\left\| \nabla v \right\| _{L^{\infty}}\left\| B \right\| _{L^q}^{q}.
		\end{split}
	\end{equation}
	Then differentiating \eqref{2.18} with respect to $x_j$, multiplying by $\partial _jB_i\left| \partial _jB_i \right|^{q-2}$ and integrating by parts, we have
	\begin{equation}\label{2.20}
		\begin{split}
			\frac{d}{dt}\int{\left| \partial _jB_i \right|^qdx}&\le C\int{\left| \nabla v \right|\left| \nabla B \right|^q+\left| B \right|\left| \nabla B \right|^{q-1}\left| \nabla ^2v \right|dx}
			\\
			&\le C\left\| \nabla v \right\| _{L^{\infty}}\left\| \nabla B \right\| _{L^q}^{q}+C\left\| \nabla ^2v \right\| _{L^q}\left\| \nabla B \right\| _{L^q}^{q-1}\left\| B \right\| _{L^{\infty}}.
		\end{split}
	\end{equation}
	Combining \eqref{2.19} and \eqref{2.20}, and using Sobolev inequality yields
	\begin{equation}\label{2.21}
		\frac{d}{dt}\left\| B \right\| _{W^{1,q}}\le C\left\| \nabla v \right\| _{W^{1,q}}\left\| B \right\| _{W^{1,q}}.
	\end{equation}
	An application of Gronwall's inequality and \eqref{2.12} implies that, for $0\le t\le \min(T_1,T^*)$, 
	$$\left\| B \left( t \right) \right\| _{W^{1,q}}\le Cc_0,$$
	and 
	\begin{equation*}
		\begin{split}
			\left\| B _t\left( t \right) \right\| _{L^q}&\le C\left( \left\| B \right\| _{W^{1,q}}\left\| v \right\| _{W^{1,q}} \right) 
			\\
			&\le Cc_0\left\| v \right\| _{H^2}\le Cc_0c_2.
		\end{split}
	\end{equation*}
	\par
	We now turn to the existence of the pressure $P$ and the corresponding estimates. The approach for estimating the pressure is similar to that used for the density and the magnetic field, since the pressure $P$ satisfies
	\begin{equation}\label{2.24}
		P_t+v\cdot \nabla P+\gamma P\nabla \cdot v=0.
	\end{equation}
	By applying the same method used for \eqref{2.20} and \eqref{2.21}, we can easily obtain
	\begin{equation}
		\left\| P\left( t \right) \right\| _{W^{1,q}}\le \left\| P_0 \right\| _{W^{1,q}}\exp \left( C\int_0^t{\left\| \nabla v\left( s \right) \right\| _{W^{1,q}}ds} \right) .
	\end{equation} 
	Therefore, for $0\le t\le \min(T_1,T^*)$, we get 
	\begin{equation*}
		\left\| P \left( t \right) \right\| _{W^{1,q}}\le Cc_0,
	\end{equation*}
	and 
	\begin{equation*}
		\begin{split}
			\left\| P_t\left( t \right) \right\| _{L^q}&=\left\| v\cdot \nabla P+\gamma P\nabla \cdot v \right\| _{L^q}\le C\left( \left\| P \right\| _{W^{1,q}}\left\| v \right\| _{W^{1,q}} \right) 
			\\
			&\le Cc_0\left\| v \right\| _{H^2}\le Cc_0c_2.
		\end{split}
	\end{equation*}
	\par
	Finally, we prove the existence of $u$ to the linearized system \eqref{2.1}. We differentiate $\eqref{2.1}_2$ with respect to $t$ and multiply the resulting equation by $u_{t}$, we arrive at
	\begin{equation}
		\begin{split}
			\frac{1}{2}&\frac{d}{dt}\int{\rho u_{t}^{2}dx}+\int{\mu \left| \nabla u_t \right|^2+\left( \mu +\lambda \right) \left| \nabla \cdot u_t \right|^2dx}
			\\
			&=\int{\left( -\nabla P_t-\rho _tv\cdot \nabla u-\rho \left( 2v\cdot \nabla u_t+v_t\cdot \nabla u \right) +\nabla \cdot \left( BB_tI-2B\otimes B_t \right) \right) u_tdx}.
		\end{split}
	\end{equation}
	A combined application of the H\"older, Young and Gagliardo-Nirenberg inequalities yields
	\begin{equation}\label{2.29}
		\begin{split}
			\frac{1}{2}\frac{d}{dt}&\int{\rho u_{t}^{2}dx}+\int{\mu \left| \nabla u_t \right|^2dx}
			\\
			\le &C\left( \left\| P_t \right\| _{L^2}^{2}+\left\| \rho _t \right\| _{L^2}^{2}\left\| v \right\| _{L^{\infty}}^{2}\left\| \nabla u \right\| _{L^4}^{2}+\left\| \sqrt{\rho} \right\| _{L^{\infty}}^{2}\left\| v \right\| _{L^{\infty}}^{2}\left\| \sqrt{\rho}u_t \right\| _{L^2}^{2} \right. 
			\\
			&\left. +\left\| B \right\| _{L^{\infty}}^{2}\left\| B_t \right\| _{L^2}^{2}+\left\| \sqrt{\rho} \right\| _{L^{\infty}}\left\| v_t \right\| _{L^4}\left\| \nabla u \right\| _{L^4}\left\| \sqrt{\rho}u_t \right\| _{L^2} \right) 
			\\
			\le& C\left( \left\| P_t \right\| _{L^2}^{2}+\left\| \rho _t \right\| _{L^2}^{2}\left\| v \right\| _{L^{\infty}}^{2}\left\| \nabla u \right\| _{L^4}^{2}+\left\| \sqrt{\rho} \right\| _{L^{\infty}}^{2}\left\| v \right\| _{L^{\infty}}^{2}\left\| \sqrt{\rho}u_t \right\| _{L^2}^{2}+\left\| B \right\| _{L^{\infty}}^{2}\left\| B_t \right\| _{L^2}^{2} \right) 
			\\
			&+\frac{C}{\varepsilon}\left\| \sqrt{\rho} \right\| _{L^{\infty}}^{2}\left\| \nabla u \right\| _{L^2}\left\| \nabla u \right\| _{H^1}+\varepsilon \left\| \nabla v_t \right\| _{L^2}^{2}\left\| \sqrt{\rho}u_t \right\| _{L^2}^{2},
		\end{split}
	\end{equation}
	for any $\varepsilon>0$. Using the fact that 
	\begin{equation}\label{2.30}
		\frac{1}{2}\frac{d}{dt}\int{\left| \nabla u \right|^2dx}=\int{\nabla u\cdot \nabla u_tdx}\le \frac{\mu}{2}\int{\left| \nabla u_t \right|^2dx}+C\int{\left| \nabla u \right|^2dx},
	\end{equation}
	and it follows from the elliptic regularity results that 
	\begin{equation}\label{2.31}
		\begin{split}
			\left\| \nabla u \right\| _{H^1}&\le C\left( \left\| \rho u_t \right\| _{L^2}+\left\| \rho v\cdot \nabla u \right\| _{L^2}+\left\| \nabla P \right\| _{L^2}+\left\| B \right\| _{W^{1,q}}\left\| \nabla B \right\| _{L^2} \right) 
			\\
			&\le C\left( c_{0}^{\frac{1}{2}}\left\| \sqrt{\rho}u_t \right\| _{L^2}+c_0c_2\left\| \nabla u \right\| _{L^2}+c_0+c_{0}^{2} \right),
		\end{split}
	\end{equation}
	so one can get
	\begin{equation}\label{2.32}
		\begin{split}
			\left\| \rho _t \right\| _{L^2}^{2}\left\| v \right\| _{L^{\infty}}^{2}\left\| \nabla u \right\| _{L^4}^{2}&\le Cc_{0}^{2}c_{2}^{4}\left\| \nabla u \right\| _{H^1}^{2}
			\\
			&\le Cc_{0}^{2}c_{2}^{4}\left( c_0\left\| \sqrt{\rho}u_t \right\| _{L^2}^{2}+c_{0}^{2}c_{2}^{2}\left\| \nabla u \right\| _{L^2}^{2}+c_{0}^{4} \right) 
			\\
			&\le Cc_{0}^{3}c_{2}^{4}\left\| \sqrt{\rho}u_t \right\| _{L^2}^{2}+Cc_{0}^{4}c_{2}^{6}\left\| \nabla u \right\| _{L^2}^{2}+Cc_{0}^{6}c_{2}^{4}.
		\end{split}
	\end{equation}
	Substituting \eqref{2.30}-\eqref{2.32} into \eqref{2.29} yields
	\begin{equation}
		\begin{split}
			\frac{d}{dt}&\int{\rho u_{t}^{2}+\left| \nabla u \right|^2dx}+\int{\mu \left| \nabla u_t \right|^2dx}
			\\
			\le& C\left( c_{0}^{2}c_{2}^{2}+c_{0}^{3}c_{2}^{4}\left\| \sqrt{\rho}u_t \right\| _{L^2}^{2}+c_{0}^{4}c_{2}^{6}\left\| \nabla u \right\| _{L^2}^{2}+c_{0}^{6}c_{2}^{4}+c_0c_{2}^{2}\left\| \sqrt{\rho}u_t \right\| _{L^2}^{2}+c_{0}^{4}c_{2}^{2} \right) 
			\\
			&+\frac{C}{\varepsilon}c_0\left\| \nabla u \right\| _{L^2}\left( c_{0}^{\frac{1}{2}}\left\| \sqrt{\rho}u_t \right\| _{L^2}+c_0c_2\left\| \nabla u \right\| _{L^2}+c_{0}^{2} \right) +\varepsilon \left\| \nabla v_t \right\| _{L^2}^{2}\left\| \sqrt{\rho}u_t \right\| _{L^2}^{2}
			\\
			\le& C\left( c_{0}^{3}c_{2}^{4}\left\| \sqrt{\rho}u_t \right\| _{L^2}^{2}+c_{0}^{4}c_{2}^{6}\left\| \nabla u \right\| _{L^2}^{2}+c_{0}^{6}c_{2}^{4} \right) 
			\\
			&+\frac{C}{\varepsilon}c_0\left( \left\| \nabla u \right\| _{L^2}^{2}+c_0\left\| \sqrt{\rho}u_t \right\| _{L^2}^{2}+c_{0}^{2}c_{2}^{2}\left\| \nabla u \right\| _{L^2}^{2}+c_{0}^{2} \right) +\varepsilon \left\| \nabla v_t \right\| _{L^2}^{2}\left\| \sqrt{\rho}u_t \right\| _{L^2}^{2}
			\\
			\le& C\left( c_{0}^{6}c_{2}^{4}+\frac{c_{0}^{3}}{\varepsilon} \right) +C\left( c_{0}^{3}c_{2}^{4}+\frac{c_{0}^{2}}{\varepsilon} \right) \left\| \sqrt{\rho}u_t \right\| _{L^2}^{2}
			\\
			&+C\left( c_{0}^{4}c_{2}^{6}+\frac{c_{0}^{3}c_{2}^{2}}{\varepsilon} \right) \left\| \nabla u \right\| _{L^2}^{2}+\varepsilon \left\| \nabla v_t \right\| _{L^2}^{2}\left\| \sqrt{\rho}u_t \right\| _{L^2}^{2}
			\\
			\le& \left( \frac{C}{\varepsilon}c_{2}^{10}+\varepsilon \left\| \nabla v_t \right\| _{L^2}^{2} \right) \left( 1+\left\| \sqrt{\rho}u_t \right\| _{L^2}^{2}+\left\| \nabla u \right\| _{L^2}^{2} \right) .
		\end{split}
	\end{equation}
	Using Gronwall's inequality, we get 
	\begin{equation}\label{2.34}
		\begin{split}
			&\underset{0\le t\le T_2}{\mathrm{sup}}\left( \left\| \sqrt{\rho}u_t \right\| _{L^2}^{2}+\left\| \nabla u \right\| _{L^2}^{2} \right) +\int_0^{T_2}{\left\| \nabla u_t \right\| _{L^2}^{2}dt}
			\\
			\le& \exp \left( C\int_0^{T_2}{\frac{1}{\varepsilon}c_{2}^{10}+\varepsilon \left\| \nabla v_t \right\| _{L^2}^{2}dt} \right) \left( \underset{t\rightarrow 0^+}{\lim}\left\| \sqrt{\rho}u_t \right\| _{L^2}^{2}+\left\| \nabla u_0 \right\| _{L^2}^{2}+\int_0^{T_2}{\frac{C}{\varepsilon}c_{2}^{10}+\varepsilon \left\| \nabla v_t \right\| _{L^2}^{2}dt} \right) .
		\end{split}
	\end{equation} 
	Since one can rewrite $\eqref{2.1}_2$ as 
	$$\rho ^{\frac{1}{2}}u_t=-\rho ^{\frac{1}{2}}v\cdot \nabla u-g.$$
	Therefore, it holds that 
	\begin{equation*}
		\begin{split}
			\underset{t\rightarrow 0^+}{\lim}\left\| \sqrt{\rho}u_t \right\|_{L^2} &=\underset{t\rightarrow 0^+}{\lim}\left\| \sqrt{\rho}v\cdot \nabla u+g \right\| _{L^2}
			\\
			&\le \left\| \rho _0 \right\| _{L^{\infty}}^{\frac{1}{2}}\left\| u_0 \right\| _{L^{\infty}}\left\| \nabla u_0 \right\| _{L^2}+\left\| g \right\| _{L^2}\le Cc_{0}^{\frac{5}{2}}.
		\end{split}
	\end{equation*}  
	Taking $\varepsilon =c_{1}^{-2},T_2=\min \left(T^*, T_1,\frac{1}{c_{1}^{2}c_{2}^{10}} \right) $, it follows from \eqref{2.34} that 
	\begin{equation*}
		\underset{0\le t\le T_2}{\mathrm{sup}}\left( \left\| \sqrt{\rho}u_t \right\| _{L^2}^{2}+\left\| \nabla u \right\| _{L^2}^{2} \right) +\int_0^{T_2}{\left\| \nabla u_t \right\| _{L^2}^{2}dt}\le Cc_{0}^{5}.
	\end{equation*}
	Since
	\begin{equation}
		\begin{split}
			\left\| \nabla u \right\| _{H^1}&\le C\left( c_{0}^{\frac{1}{2}}\left\| \sqrt{\rho}u_t \right\| _{L^2}+c_0\left\| v \right\| _{L^4}\left\| \nabla u \right\| _{L^4}+\left\| \nabla P \right\| _{L^2}+c_{0}^{2} \right) 
			\\
			&\le C\left( c_{0}^{3}+c_0c_1\left\| \nabla u \right\| _{L^2}^{\frac{1}{2}}\left\| \nabla u \right\| _{H^1}^{\frac{1}{2}} \right) ,
		\end{split}
	\end{equation}
	we have 
	\begin{equation*}
		\begin{split}
			\left\| \nabla u \right\| _{H^1}&\le C\left( c_{0}^{3}+c_{0}^{2}c_{1}^{2}\left\| \nabla u \right\| _{L^2} \right) 
			\\
			&\le Cc_{0}^{\frac{9}{2}}c_{1}^{2}.
		\end{split}
	\end{equation*}
	We then invoke the standard elliptic regularity theory, which allows us to deduce that
	\begin{equation}
		\begin{split}
			\left\| u \right\| _{W^{2,q}}&\le C\left( \left\| \rho u_t \right\| _{L^q}+\left\| \rho v\cdot \nabla u \right\| _{L^q}+\left\| \nabla P \right\| _{L^q}+\left\| B\times \left( \nabla \times B \right) \right\| _{L^q} \right) 
			\\
			&\le C\left( c_0\left\| \nabla u_t \right\| _{L^2}+c_0c_2\left\| \nabla u \right\| _{L^q}+c_0+\left\| B \right\| _{W^{1,q}}\left\| \nabla B \right\| _{L^q} \right) 
			\\
			&\le C\left( c_0\left\| \nabla u_t \right\| _{L^2}+c_0c_2\left\| \nabla u \right\| _{L^q}+c_{0}^{2} \right) 
			\\
			&\le C\left( c_0\left\| \nabla u_t \right\| _{L^2}+c_{0}^{\frac{11}{2}}c_{1}^{2}c_2 \right) .
		\end{split}
	\end{equation}
	Then
	\begin{equation}
		\begin{split}
			\int_0^{T_3}{\left\| u \right\| _{W^{2,q}}^{2}ds}&\le C\left( \int_0^{T_3}{c_{0}^{11}c_{1}^{4}c_{2}^{2}ds}+\int_0^{T_3}{c_{0}^{2}\left\| \nabla u_t \right\| _{L^2}^{2}ds} \right) 
			\\
			&\le CT_3c_{0}^{11}c_{1}^{4}c_{2}^{2}+Cc_{0}^{7}\le Cc_{0}^{7},
		\end{split}
	\end{equation}
	provided
	\begin{equation}\label{2.42}
		T_3\overset{\mathrm{def}}{=}\min \left( T_2,\frac{1}{c_{0}^{4}c_{1}^{4}c_{2}^{2}} \right) .
	\end{equation}
	Taking $c_1=3Cc_{0}^{7},c_2=27C^4c_{0}^{\frac{37}{2}},\kappa =9C^3c_{0}^{\frac{23}{2}}$, we get 
	\begin{equation}
		\begin{split}
			\left\| u \right\| _{L^{\infty}H^1}+\kappa ^{-1}\left\| u \right\| _{L^{\infty}H^2}+\int_0^{T_*}{\left\| u_t\left( s \right) \right\| _{H^1}^{2}+\left\| u\left( s \right) \right\| _{W^{2,q}}^{2}ds}&\le Cc_{0}^{5}+\frac{Cc_{0}^{\frac{9}{2}}c_{1}^{2}}{9C^3c_{0}^{\frac{23}{2}}}+Cc_{0}^{7}
			\\
			&\le 3Cc_{0}^{7}=c_1.
		\end{split}
	\end{equation}
	Employing the method used in \cite{Cho-Choe-Kim} to prove continuity, we finally arrive at \eqref{2.8}. 
	This completes the proof of Lemma \ref{lemma 2.1}.
\end{proof}
We are now ready to prove the existence for the case of a density that allows for vacuum.
\begin{lemma}\label{lemma2.2}
	Assume that the initial data $(\rho_0,u_0,P_0,B_0)$ satisfy the regularity conditions \eqref{2.3} and the additional conditions
	\begin{equation}
		\begin{cases}
			\rho _{0}^{1/2}g=-\mu \Delta u_0-\left( \mu +\lambda \right) \nabla \left( \nabla \cdot u_0 \right) +\nabla P_0+B_0\times \left( \nabla \times B_0 \right) , \mathrm{for}\, \mathrm{some}\, g\in L^2,\\
			2+\left\| \left( \rho _0,B_0,P_0 \right) \right\| _{W^{1,q}}+\left\| u_0 \right\| _{H^2}+\left\| g \right\| _{L^2}^{2}<c_0.\\
		\end{cases}
	\end{equation}
	If the function $v$ further satisfies conditions \eqref{2.55}, \eqref{2.5} and \eqref{2.6}, then the initial-boundary value problem \eqref{2.1}, \eqref{2.2} admits a unique strong solution which satisfies \eqref{2.8} and \eqref{2.100}. Here $c_1, c_2, \kappa$ and $T_*$ are defined in \eqref{2.111} and \eqref{2.42}, respectively.
\end{lemma}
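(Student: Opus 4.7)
The plan is a standard density-approximation argument: we regularize the initial density to make it strictly positive, invoke Lemma \ref{lemma 2.1} on the approximate problems, and pass to the limit using the uniform estimates. The crucial point is that all the bounds in \eqref{2.100} and the existence time $T_*$ in Lemma \ref{lemma 2.1} depend only on $c_0$, $c_1$, $c_2$, $\kappa$, $T^*$, and not on the positive lower bound $\delta$ of the density.

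First I would construct the approximate initial data. For $\delta\in(0,1)$, set
\begin{equation*}
\rho_0^{\delta}=\rho_0+\delta,\qquad P_0^{\delta}=P_0,\qquad B_0^{\delta}=B_0,\qquad u_0^{\delta}=u_0,
\end{equation*}
and choose $g^{\delta}=(\rho_0/\rho_0^{\delta})^{1/2}g\in L^2$, so that the compatibility condition
\begin{equation*}
(\rho_0^{\delta})^{1/2}g^{\delta}=-\mu\Delta u_0-(\mu+\lambda)\nabla(\nabla\cdot u_0)+\nabla P_0+B_0\times(\nabla\times B_0)
\end{equation*}
holds and $\|g^{\delta}\|_{L^2}\le\|g\|_{L^2}$. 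Thus $2+\|(\rho_0^{\delta},B_0,P_0)\|_{W^{1,q}}+\|u_0\|_{H^2}+\|g^{\delta}\|_{L^2}^2<c_0$ uniformly in $\delta$, so Lemma \ref{lemma 2.1} produces a unique strong solution $(\rho^{\delta},u^{\delta},P^{\delta},B^{\delta})$ on $[0,T_*]$ satisfying \eqref{2.8} together with the bounds \eqref{2.100}, uniformly in $\delta$.

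Next I would pass to the limit $\delta\to 0^+$. The uniform estimates \eqref{2.100} give, up to a subsequence, weak-$*$ convergence
\begin{equation*}
(\rho^{\delta},P^{\delta},B^{\delta})\xrightharpoonup{*}(\rho,P,B)\text{ in }L^{\infty}(0,T_*;W^{1,q}),\qquad u^{\delta}\xrightharpoonup{*}u\text{ in }L^{\infty}(0,T_*;H^2),
\end{equation*}
together with analogous weak convergences for the time derivatives and for $u^{\delta}$ in $L^2(0,T_*;W^{2,q})$, $u_t^{\delta}$ in $L^2(0,T_*;H^1)$, and $\sqrt{\rho^{\delta}}u_t^{\delta}$ in $L^{\infty}(0,T_*;L^2)$. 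Because the time-derivative bounds combined with the spatial bounds give compact embeddings via Aubin--Lions, one upgrades to strong convergence of $(\rho^{\delta},P^{\delta},B^{\delta})$ in $C([0,T_*];L^q)$ and of $u^{\delta}$ in $C([0,T_*];H^1)$. These are enough to pass to the limit in every nonlinear term of \eqref{2.1}, because $v$ is fixed and the products arising are of the form (bounded in $W^{1,q}$)$\times$($v$ or $v_t$), or $BB_t$, etc. Verifying the initial conditions and the boundary condition $u|_{\partial\Omega}=0$ follows from the strong convergence in $C([0,T_*];L^2)$ and trace continuity.

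Uniqueness is obtained by the usual energy method: given two solutions with the same data, subtract the equations and estimate the differences $(\bar\rho,\bar P,\bar B,\bar u)$ in $L^2$ for the transport quantities and in a $\sqrt{\rho}$-weighted $L^2$ for the velocity, closing the estimate by Gronwall's inequality using the bounds \eqref{2.100}. Finally, the continuity in time stated in \eqref{2.8} is recovered as in \cite{Cho-Choe-Kim}: one uses that the limits belong to the correct weak spaces and, combined with the fact that each norm is continuous at $t=0$ (obtainable by testing against suitable smooth functions), promotes weak continuity to strong continuity. The main technical obstacle is precisely this strong recovery of $(\rho,P,B)\in C([0,T_*];W^{1,q})$ and $\sqrt{\rho}u_t\in L^{\infty}(0,T_*;L^2)$ in the limit, since the weak convergences alone do not suffice; however, the argument of \cite{Cho-Choe-Kim} adapts without essential change because the transport equations for $\rho$, $P$, $B$ all share the same structure with coefficient $v$, and the linearized velocity equation is uniformly parabolic away from the density weight.
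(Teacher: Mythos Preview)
Your proof is correct and follows essentially the same density-approximation strategy as the paper: regularize $\rho_0$ to $\rho_0+\delta$, apply Lemma \ref{lemma 2.1} with bounds uniform in $\delta$, pass to the limit by Aubin--Lions compactness, and conclude uniqueness by an energy/Gronwall argument on the linear transport equations for $\tilde\rho,\tilde P,\tilde B$ and then on $\tilde u$. The one notable variation is in the regularized initial data: you keep $u_0^\delta=u_0$ and adjust $g$ to $g^\delta=(\rho_0/\rho_0^\delta)^{1/2}g$, whereas the paper keeps $g$ fixed and defines $u_0^\delta\in H_0^1\cap H^2$ as the solution of the elliptic problem $\mu\Delta u_0^\delta+(\mu+\lambda)\nabla(\nabla\cdot u_0^\delta)=\nabla P_0+B_0\times(\nabla\times B_0)-(\rho_0^\delta)^{1/2}g$; your choice is slightly more direct (no auxiliary elliptic problem) and has the added convenience that the hypothesis $v(0)=u_0^\delta$ in \eqref{2.55} is automatically satisfied.
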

\begin{proof}
	For any $\delta \in (0,1)$, set $\rho _{0}^{\delta}=\rho _0+\delta , P_{0}^{\delta}=P_0, B_{0}^{\delta}=B_0$. Let $u_{0}^{\delta}\in H_{0}^{1}\cap H^2$ be the solution of 
	\begin{equation*}
		\begin{cases}
			\mu \Delta u_{0}^{\delta}+\left( \mu +\lambda \right) \nabla \left( \nabla \cdot u_{0}^{\delta} \right) =\nabla P_{0}^{\delta}+B_{0}^{\delta}\times \left( \nabla \times B_{0}^{\delta} \right) -\left( \rho _{0}^{\delta} \right) ^{1/2}g,\\
			u_{0}^{\delta}|_{\partial \Omega}=0.\\
		\end{cases}
	\end{equation*}
	Then it holds that 
	\begin{equation*}
		\left\| u_{0}^{\delta}-u_0 \right\| _{H^2}\le C\| \left( \left( \rho _{0}^{\delta} \right) ^{1/2}-\rho _{0}^{1/2} \right) g \| _{L^2}\rightarrow 0.
	\end{equation*}
	Moreover, for $\delta>0$ small enough, we have
	\begin{equation*}
		2+\left\| \left( \rho _{0}^{\delta},B_{0}^{\delta},P_{0}^{\delta} \right) \right\| _{W^{1,q}}+\left\| u_{0}^{\delta} \right\| _{H^2}+\left\| g \right\| _{L^2}^{2}<c_0.
	\end{equation*}
	Then it follows from Lemma \ref{lemma 2.1} that, for any small $\delta>0$, there exists unique strong solution $\left( \rho ^{\delta},u^{\delta},P^{\delta},B^{\delta} \right)$ to the linearized problem \eqref{2.1} which satisfies the estimates \eqref{2.100}. By standard compactness techniques, a subsequence of the approximate solutions $\left( \rho ^{\delta},u^{\delta},P^{\delta},B^{\delta} \right)$ converges strongly to a limit $\left( \rho ,u,P,B \right)$ in the following sense
	\begin{equation}
		\left( \rho ^{\delta},P^{{\delta}},B^{{\delta}} \right) \rightarrow \left( \rho ,P,B \right) \,\,\mathrm{in}\,\,C\left( \left[ 0,T^* \right] ;L^q \right) ,u^{\delta}\rightarrow u\,\,\mathrm{in}\,\,C\left( \left[ 0,T^* \right] ;H^1 \right) .
	\end{equation}
	It is clear that the limit $\left( \rho ,u,P,B \right)$ also obeys the a priori estimates \eqref{2.100}. Moreover, it is readily verified that $\left( \rho ,u,P,B \right)$ is a strong solution to the linearized problem \eqref{2.1} which satisfies the regularity \eqref{2.8}. 
	
	It remains to prove that the solution is unique within this regularity class. Let $(\rho_1,u_1,P_1,B_1)$ and $(\rho_2,u_2,P_2,B_2)$ be two solutions with the same initial data. Set
	$$\tilde{\rho}=\rho _1-\rho _2, \tilde{u}=u_1-u_2, \tilde{P}=P_1-P_2, \tilde{B}=B_1-B_2. $$
	Then $\tilde{\rho} \in L^{\infty}\left( 0,T_*;L^q \right)$ solves the following equation
	$$\tilde{\rho}_t+\nabla \cdot \left( \tilde{\rho}v \right) =0,$$
	so we have 
	$$\left\| \tilde{\rho}\left( t \right) \right\| _{W^{1,q}}\le \left\| \tilde{\rho}_0 \right\| _{W^{1,q}}\exp \left( C\int_0^t{\left\| \nabla v\left( s \right) \right\| _{W^{1,q}}ds} \right) =0.$$
	That is $\rho_1=\rho_2$. Similarly we can also get $B_1=B_2$ and $P_1=P_2$. The difference function $\tilde{u}$ satisfies
	\begin{equation}\label{2.35}
		\rho \tilde{u}_t+\rho v\cdot \nabla \tilde{u}-\mu \Delta \tilde{u}-\left( \mu +\lambda \right) \nabla \left( \nabla \cdot \tilde{u} \right) =0.
	\end{equation} 
	Multiplying \eqref{2.35} by $\tilde{u}$ and integrating by parts, one has 
	$$\frac{1}{2}\int{\rho \left| \tilde{u} \right|^2dx}+\int_0^{t}{\int{\mu \left| \nabla \tilde{u} \right|^2+\left( \mu +\lambda \right) \left| \nabla \cdot \tilde{u} \right|^2dxdt}=0}.$$
	We deduce that $\left\| \sqrt{\rho}\tilde{u} \right\| _{L^2}=0$ and $\left\| \nabla \tilde{u} \right\| _{L^2}=0$. This gives $u_1=u_2$.
\end{proof}
\subsection{Proof of Theorem \ref{th1}}
On the basis of the linear problem's a priori estimates, one can demonstrate the existence and regularity of a unique solution $(\rho,u,P,B)$ to the nonlinear problem.
\begin{proof}
	Let $u^{\left( 0 \right)}\in C\left( \left[ 0,\infty \right) ; H^2\cap H_{0}^{1} \right) $ be the solution to 
	\begin{equation*}
		\pi _t-\Delta \pi =0, \pi \left( 0 \right) =u_0, \pi |_{\partial \Omega}=0.
	\end{equation*}
	Then 
	\begin{equation*}
		\underset{0\le t\le T_*}{\mathrm{sup}}\left( \| u^{\left( 0 \right)}\left( t \right) \| _{H_{0}^{1}}+\kappa ^{-1}\| u^{\left( 0 \right)}\left( t \right) \| _{H^2} \right) +\int_0^{T_*}{\| u_{t}^{\left( 0 \right)}\left( s \right) \| _{H^1}^{2}+\| u^{\left( 0 \right)}\left( s \right) \| _{W^{2,q}}^{2}ds}\le c_1.
	\end{equation*}
	By lemma \ref{lemma2.2}, the linearized problem \eqref{2.1} with $v=u^{(0)}$ admits a unique strong solution $\left( \rho ^{\left( 1 \right)},u^{\left( 1 \right)},P^{\left( 1 \right)},B^{\left( 1 \right)} \right) $. By induction, we assume that $u^{(i-1)}$ has been defined for $i \ge 1$. Let $\left( \rho ^{\left( i \right)},u^{\left( i \right)},P^{\left( i \right)},B^{\left( i \right)} \right)$ be the unique solution to the linearized problem \eqref{2.1} with $v=u^{(i-1)}$.
	We conclude from Lemma \ref{lemma2.2} that there exists a constant $C$ such that
	\begin{equation}\label{2.36}
		\begin{cases}
			\left\| \rho ^{\left( i \right)} \right\| _{L^{\infty}W^{1,q}}+\| \rho _{t}^{\left( i \right)} \| _{L^{\infty}L^q}+\left\| P^{\left( i \right)} \right\| _{L^{\infty}W^{1,q}}+\| P_{t}^{\left( i \right)} \| _{L^{\infty}L^q}\\
			+\left\| B^{\left( i \right)} \right\| _{L^{\infty}W^{1,q}}+\| B_{t}^{\left( i \right)} \| _{L^{\infty}L^q}\le Cc_{2}^{2},\\
			\| \sqrt{\rho ^{\left( i \right)}}u_{t}^{\left( i \right)} \| _{L^{\infty}L^2}\le Cc_{0}^{\frac{5}{2}},\\
			\left\| u^{\left( i \right)} \right\| _{L^{\infty}H^1}+\kappa ^{-1}\left\| u^{\left( i \right)} \right\| _{L^{\infty}H^2}+\int_0^{T_*}{\| u_{t}^{\left( i \right)}\left( s \right) \| _{H^1}^{2}+\left\| u^{\left( i \right)}\left( s \right) \right\| _{W^{2,q}}^{2}ds}\le c_1.\\
		\end{cases}
	\end{equation}
	We show that the sequence $\left( \rho ^{\left( i \right)},u^{\left( i \right)},P^{\left( i \right)},B^{\left( i \right)} \right)$ converges to a solution to the nonlinear problem in a strong sense.
	Let us define 
	\begin{equation*}
		\tilde{\rho}^{\left( i+1 \right)}=\rho ^{\left( i+1 \right)}-\rho ^{\left( i \right)}, \tilde{u}^{\left( i+1 \right)}=u^{\left( i+1 \right)}-u^{\left( i \right)},\tilde{P}^{\left( i+1 \right)}=P^{\left( i+1 \right)}-P^{\left( i \right)},\tilde{B}^{\left( i+1 \right)}=B^{\left( i+1 \right)}-B^{\left( i \right)}.
	\end{equation*}
	Then we get 
	\begin{equation}\label{2.37}
		\begin{cases}
			\tilde{\rho}_{t}^{\left( i+1 \right)}+\nabla \cdot \left( \tilde{\rho}^{\left( i+1 \right)}u^{\left( i \right)} \right) +\nabla \cdot \left( \rho ^{\left( i \right)}\tilde{u}^{\left( i \right)} \right) =0,\\
			\rho ^{\left( i+1 \right)}\tilde{u}_{t}^{\left( i+1 \right)}+\rho ^{\left( i+1 \right)}u^{\left( i \right)}\cdot \nabla \tilde{u}^{\left( i+1 \right)}-\mu \Delta \tilde{u}^{\left( i+1 \right)}-\left( \mu +\lambda \right) \nabla \left( \nabla \cdot \tilde{u}^{\left( i+1 \right)} \right)\\
			=\tilde{\rho}^{\left( i+1 \right)}\left( -u_{t}^{\left( i \right)}-u^{\left( i-1 \right)}\cdot \nabla u^{\left( i \right)} \right) -\rho ^{\left( i+1 \right)}\tilde{u}^{\left( i \right)}\cdot \nabla u^{\left( i \right)}-\nabla \tilde{P}^{\left( i+1 \right)}\\
			\,\,\,\,+B^{\left( i+1 \right)}\cdot \nabla \tilde{B}^{\left( i+1 \right)}+\tilde{B}^{\left( i+1 \right)}\cdot \nabla B^{\left( i \right)}-\nabla B^{\left( i+1 \right)}\cdot B^{\left( i+1 \right)}+\nabla B^{\left( i \right)}\cdot B^{\left( i \right)},\\
			\tilde{P}_{t}^{\left( i+1 \right)}+u^{\left( i \right)}\cdot \nabla \tilde{P}^{\left( i+1 \right)}+\tilde{u}^{\left( i \right)}\cdot \nabla P^{\left( i \right)}+\gamma \tilde{P}^{\left( i+1 \right)}\nabla \cdot u^{\left( i \right)}+\gamma P^{\left( i \right)}\nabla \cdot \tilde{u}^{\left( i \right)}=0,\\
			\tilde{B}_{t}^{\left( i+1 \right)}+\nabla \times \left( \tilde{B}^{\left( i+1 \right)}\times u^{\left( i \right)}+B^{\left( i \right)}\times \tilde{u}^{\left( i \right)} \right) =0,\\
			\nabla \cdot \tilde{B}^{\left( i+1 \right)}=0.\\
		\end{cases}
	\end{equation}
	Multiplying $\eqref{2.37}_1$ by $\tilde{\rho}^{\left( i+1 \right)}$ and integrating by parts, we arrive at
	\begin{equation}\label{2.38}
		\begin{split}
			\frac{d}{dt}\int{| \tilde{\rho}^{\left( i+1 \right)}|^2dx}\le& C\int{| \nabla u^{\left( i \right)} || \tilde{\rho}^{\left( i+1 \right)}|^2dx}
			\\
			&+C\int{| \nabla \rho ^{\left( i \right)} || \tilde{u}^{\left( i \right)} || \tilde{\rho}^{\left( i+1 \right)} |+| \rho ^{\left( i \right)} || \nabla \tilde{u}^{\left( i \right)} || \tilde{\rho}^{\left( i+1 \right)} |dx}
			\\
			\le& C\| \nabla u^{\left( i \right)}\| _{W^{1,q}}\| \tilde{\rho}^{\left( i+1 \right)} \| _{L^2}^{2}
			\\
			&+C\left( \| \nabla \rho ^{\left( i \right)} \| _{L^q}+\| {\rho}^{\left( i \right)} \| _{L^{\infty}} \right) \| \nabla \tilde{u}^{\left( i \right)} \| _{L^2}\| \tilde{\rho}^{\left( i+1 \right)}\| _{L^2}
			\\
			\le& A_{\varepsilon}^{\left( i \right)}\| \tilde{\rho}^{\left( i+1 \right)} \| _{L^2}^{2}+\varepsilon \| \nabla \tilde{u}^{\left( i \right)} \| _{L^2}^{2},
		\end{split}
	\end{equation}
	where 
	$$A_{\varepsilon}^{\left( i \right)}\overset{\mathrm{def}}{=}C\| \nabla u^{\left( i \right)} \| _{W^{1,q}}+\frac{C}{\varepsilon}\left( \| \nabla \rho ^{\left( i \right)} \| _{L^q}^{2}+\| \rho^{\left( i \right)} \| _{L^{\infty}}^{2} \right).$$
	Multiplying $\eqref{2.37}_3$ by $\tilde{P}^{\left( i+1 \right)}$ and integrating by parts yields 
	\begin{equation}\label{2.39}
		\begin{split}
			\frac{d}{dt}\int{|\tilde{P}^{\left( i+1 \right)}|^2dx}\le& C\parallel \nabla u^{\left( i \right)}\parallel _{W^{1,q}}\parallel \tilde{P}^{\left( i+1 \right)}\parallel _{L^2}^{2}
			\\
			&+C\left( \parallel \nabla P^{\left( i \right)}\parallel _{L^q}+\parallel P^{\left( i \right)}\parallel _{L^{\infty}} \right) \parallel \nabla \tilde{u}^{\left( i \right)}\parallel _{L^2}\parallel \tilde{P}^{\left( i+1 \right)}\parallel _{L^2}
			\\
			\le& B_{\varepsilon}^{\left( i \right)}\parallel \tilde{P}^{\left( i+1 \right)}\parallel _{L^2}^{2}+\varepsilon \parallel \nabla \tilde{u}^{\left( i \right)}\parallel _{L^2}^{2},
		\end{split}	
	\end{equation}
	where $B_{\varepsilon}^{\left( i \right)}$ is defined as 
	\begin{equation*}
		B_{\varepsilon}^{\left( i \right)}\overset{\mathrm{def}}{=}C\| \nabla u^{\left( i \right)} \| _{W^{1,q}}+\frac{C}{\varepsilon}\left( \| \nabla P^{\left( i \right)} \| _{L^q}^{2}+\| P^{\left( i \right)} \| _{L^{\infty}}^{2} \right).
	\end{equation*}
	Taking $L^2$ inner product of $\eqref{2.37}_4$ with $\tilde{B}^{\left( i+1 \right)}$, similarly we get 
	\begin{equation}\label{2.40}
		\frac{d}{dt}\int{|\tilde{B}^{\left( i+1 \right)}|^2dx}\le C_{\varepsilon}^{\left( i \right)}\parallel \tilde{B}^{\left( i+1 \right)}\parallel _{L^2}^{2}+\varepsilon \parallel \nabla \tilde{u}^{\left( i \right)}\parallel _{L^2}^{2},
	\end{equation} 
	where 
	\begin{equation*}
		C_{\varepsilon}^{\left( i \right)}\overset{\mathrm{def}}{=}C\| \nabla u^{\left( i \right)} \| _{W^{1,q}}+\frac{C}{\varepsilon}\left( \| \nabla B^{\left( i \right)} \| _{L^q}^{2}+\| B^{\left( i \right)} \| _{L^{\infty}}^{2} \right).
	\end{equation*}
	Multiplying $\eqref{2.37}_4$ by $\tilde{u}^{\left( i+1 \right)}$ and integrating by parts, we deduce that
	\begin{equation*}
		\begin{split}
			\frac{1}{2}\frac{d}{dt}&\int{\rho ^{\left( i+1 \right)}| \tilde{u}^{\left( i+1 \right)}|^2dx}+\mu \int{| \nabla \tilde{u}^{\left( i+1 \right)}|^2dx}+\left( \mu +\lambda \right) \int{| \nabla \cdot \tilde{u}^{\left( i+1 \right)}|^2dx}
			\\
			\le& C\int{| \tilde{\rho}^{\left( i+1 \right)} |\left( | u_{t}^{\left( i \right)} |+| u^{\left( i-1 \right)}||\nabla u^{\left( i \right)} | \right) | \tilde{u}^{\left( i+1 \right)} |dx}
			\\
			&+C\int{| \rho ^{\left( i+1 \right)} || \tilde{u}^{\left( i \right)} || \nabla u^{\left( i \right)}|| \tilde{u}^{\left( i+1 \right)}|+| \tilde{P}^{\left( i+1 \right)} || \nabla \tilde{u}^{\left( i+1 \right)}|dx}
			\\
			&+C\int{\left( | B^{\left( i \right)} |+| B^{\left( i+1 \right)} | \right) | \tilde{B}^{\left( i+1 \right)} || \nabla \tilde{u}^{\left( i+1 \right)}|dx}
			\\
			\le& C\| \tilde{\rho}^{\left( i+1 \right)} \| _{L^2}\left( \| u_{t}^{\left( i \right)} \| _{L^4}+\| u^{\left( i-1 \right)} \| _{L^{\infty}}\| \nabla u^{\left( i \right)}\| _{L^4} \right)\| \tilde{u}^{\left( i+1 \right)} \| _{L^4}
			\\
			&+C\| \rho ^{\left( i+1 \right)} \| _{L^{\infty}}^{1/2}\| \tilde{u}^{\left( i \right)} \| _{L^4}\| \nabla u^{\left( i \right)} \| _{L^4}\| \sqrt{\rho ^{\left( i+1 \right)}}\tilde{u}^{\left( i+1 \right)} \| _{L^2}
			\\
			&+C\| \tilde{P}^{\left( i+1 \right)} \| _{L^2}\| \nabla \tilde{u}^{\left( i+1 \right)} \| _{L^2}+C\left( \| B^{\left( i \right)} \| _{L^{\infty}}+\| B^{\left( i+1 \right)} \| _{L^{\infty}} \right) \| \tilde{B}^{\left( i+1 \right)} \| _{L^2}\| \nabla \tilde{u}^{\left( i+1 \right)} \| _{L^2}
			\\
			\le& C\| \tilde{\rho}^{\left( i+1 \right)} \| _{L^2}\| \nabla \tilde{u}^{\left( i+1 \right)} \| _{L^2}\left( \| u_{t}^{\left( i \right)} \| _{L^4}+1 \right) +C{\| \nabla \tilde{u}^{\left( i \right)} \| _{L^2}}\| \sqrt{\rho ^{\left( i+1 \right)}}\tilde{u}^{\left( i+1 \right)} \| _{L^2}
			\\
			&+C\| \tilde{P}^{\left( i+1 \right)} \| _{L^2}\| \nabla \tilde{u}^{\left( i+1 \right)} \| _{L^2}+C\| \tilde{B}^{\left( i+1 \right)} \| _{L^2}\| \nabla \tilde{u}^{\left( i+1 \right)} \| _{L^2}.
		\end{split}
	\end{equation*}
	Using Young's inequality, one can get
	\begin{equation*}
		\begin{split}
			\frac{1}{2}\frac{d}{dt}\| \sqrt{\rho ^{\left( i+1 \right)}}\tilde{u}^{\left( i+1 \right)} \| _{L^2}^{2}+\mu \| \nabla \tilde{u}^{\left( i+1 \right)} \| _{L^2}^{2}\le& C\| \tilde{\rho}^{\left( i+1 \right)} \| _{L^2}^{2}\left( \| u_{t}^{\left( i \right)} \| _{L^4}+1 \right) ^2
			\\
			&+\frac{C}{\varepsilon}\| \sqrt{\rho ^{\left( i+1 \right)}}\tilde{u}^{\left( i+1 \right)} \|^2 _{L^2}+C\| \tilde{P}^{\left( i+1 \right)} \| _{L^2}^{2}
			\\
			&+C\| \tilde{B}^{\left( i+1 \right)} \| _{L^2}^{2}+\frac{\mu}{2}\| \nabla \tilde{u}^{\left( i+1 \right)} \| _{L^2}^{2}+\varepsilon \| \nabla \tilde{u}^{\left( i \right)} \| _{L^2}^{2}.
		\end{split}
	\end{equation*}
	Hence, we arrive at
	\begin{equation}\label{2.41}
		\begin{split}
			\frac{1}{2}\frac{d}{dt}\parallel \sqrt{\rho ^{\left( i+1 \right)}}\tilde{u}^{\left( i+1 \right)}\parallel _{L^2}^{2}+\frac{\mu}{2}\parallel \nabla \tilde{u}^{\left( i+1 \right)}\parallel _{L^2}^{2}\le& C\parallel \tilde{\rho}^{\left( i+1 \right)}\parallel _{L^2}^{2}\left( \parallel u_{t}^{\left( i \right)}\parallel _{L^4}+1 \right) ^2
			\\
			&+\frac{C}{\varepsilon}\parallel \sqrt{\rho ^{\left( i+1 \right)}}\tilde{u}^{\left( i+1 \right)}\parallel ^2_{L^2}+C\parallel \tilde{P}^{\left( i+1 \right)}\parallel _{L^2}^{2}
			\\
			&+C\parallel \tilde{B}^{\left( i+1 \right)}\parallel _{L^2}^{2}+\varepsilon \parallel \nabla \tilde{u}^{\left( i \right)}\parallel _{L^2}^{2}.
		\end{split}
	\end{equation}
	In summary, \eqref{2.38}-\eqref{2.41} show that
	\begin{equation}
		\begin{cases}
			\frac{d}{dt}\parallel \tilde{\rho}^{\left( i+1 \right)}\parallel _{L^2}^{2}\le A_{\varepsilon}^{\left( i \right)}\parallel \tilde{\rho}^{\left( i+1 \right)}\parallel _{L^2}^{2}+\varepsilon \parallel \nabla \tilde{u}^{\left( i \right)}\parallel _{L^2}^{2},\\
			\frac{d}{dt}\parallel \tilde{P}^{\left( i+1 \right)}\parallel _{L^2}^{2}\le B_{\varepsilon}^{\left( i \right)}\parallel \tilde{P}^{\left( i+1 \right)}\parallel _{L^2}^{2}+\varepsilon \parallel \nabla \tilde{u}^{\left( i \right)}\parallel _{L^2}^{2},\\
			\frac{d}{dt}\parallel \tilde{B}^{\left( i+1 \right)}\parallel _{L^2}^{2}\le C_{\varepsilon}^{\left( i \right)}\parallel \tilde{B}^{\left( i+1 \right)}\parallel _{L^2}^{2}+\varepsilon \parallel \nabla \tilde{u}^{\left( i \right)}\parallel _{L^2}^{2},\\
			\frac{d}{dt}\parallel \sqrt{\rho ^{\left( i+1 \right)}}\tilde{u}^{\left( i+1 \right)}\parallel _{L^2}^{2}+\mu \parallel \nabla \tilde{u}^{\left( i+1 \right)}\parallel _{L^2}^{2}\\
			\,\, \,\,    \le C\parallel \tilde{\rho}^{\left( i+1 \right)}\parallel _{L^2}^{2}\left( \parallel u_{t}^{\left( i \right)}\parallel _{L^4}+1 \right) ^2+\frac{C}{\varepsilon}\parallel \sqrt{\rho ^{\left( i+1 \right)}}\tilde{u}^{\left( i+1 \right)}\parallel _{L^2}\\
			\,\,\,\,\,\,\,\,        +C\parallel \tilde{P}^{\left( i+1 \right)}\parallel _{L^2}^{2}+C\parallel \tilde{B}^{\left( i+1 \right)}\parallel _{L^2}^{2}+\varepsilon \parallel \nabla \tilde{u}^{\left( i \right)}\parallel _{L^2}^{2},\\
		\end{cases}
	\end{equation}
	with
	\begin{equation}
		\begin{cases}
			A_{\varepsilon}^{\left( i \right)}\overset{\mathrm{def}}{=}C\parallel \nabla u^{\left( i \right)}\parallel _{W^{1,q}}+\frac{C}{\varepsilon}\left( \parallel \nabla \rho ^{\left( i \right)}\parallel _{L^q}^{2}+\parallel \rho ^{\left( i \right)}\parallel _{L^{\infty}}^{2} \right) ,\\
			B_{\varepsilon}^{\left( i \right)}\overset{\mathrm{def}}{=}C\parallel \nabla u^{\left( i \right)}\parallel _{W^{1,q}}+\frac{C}{\varepsilon}\left( \parallel \nabla P^{\left( i \right)}\parallel _{L^q}^{2}+\parallel P^{\left( i \right)}\parallel _{L^{\infty}}^{2} \right) ,\\
			C_{\varepsilon}^{\left( i \right)}\overset{\mathrm{def}}{=}C\parallel \nabla u^{\left( i \right)}\parallel _{W^{1,q}}+\frac{C}{\varepsilon}\left( \parallel \nabla B^{\left( i \right)}\parallel _{L^q}^{2}+\parallel B^{\left( i \right)}\parallel _{L^{\infty}}^{2} \right) .\\
		\end{cases}
	\end{equation}
	Define 
	$$\varphi ^{\left( i+1 \right)}\left( t \right) \overset{\mathrm{def}}{=}\parallel \tilde{\rho}^{\left( i+1 \right)}\parallel _{L^2}^{2}+\parallel \tilde{P}^{\left( i+1 \right)}\parallel _{L^2}^{2}+\parallel \tilde{B}^{\left( i+1 \right)}\parallel _{L^2}^{2}+\parallel \sqrt{\rho ^{\left( i+1 \right)}}\tilde{u}^{\left( i+1 \right)}\parallel _{L^2}^{2}.
	$$
	Then we get 
	\begin{equation*}
		\begin{split}
			\frac{d}{dt}&\varphi ^{\left( i+1 \right)}\left( t \right) +\mu \parallel \nabla \tilde{u}^{\left( i+1 \right)}\parallel _{L^2}^{2}
			\\
			&\le C\left( A_{\varepsilon}^{\left( i \right)}+B_{\varepsilon}^{\left( i \right)}+C_{\varepsilon}^{\left( i \right)}+\frac{C}{\varepsilon}+\parallel u_{t}^{\left( i \right)}\parallel _{L^4}^{2}+1 \right) \varphi ^{\left( i+1 \right)}\left( t \right) +\varepsilon \parallel \nabla \tilde{u}^{\left( i \right)}\parallel _{L^2}^{2}.
		\end{split}
	\end{equation*}
	Given that $\varphi ^{\left( i+1 \right)}\left( 0 \right)=0$, it follows from Gronwall's inequality that
	\begin{equation}
		\underset{0\le t\le T_*}{\mathrm{sup}}\varphi ^{\left( i+1 \right)}\left( t \right) +\mu \int_0^{T_*}{\parallel \nabla \tilde{u}^{\left( i+1 \right)}\parallel _{L^2}^{2}dt}\le \varepsilon \int_0^{T_*}{\parallel \nabla \tilde{u}^{\left( i \right)}\parallel _{L^2}^{2}dt}\cdot \exp \left( \int_0^{T_*}{D_{\varepsilon}^{\left( i \right)}\left( t \right) dt} \right) ,
	\end{equation}
	where 
	$$D_{\varepsilon}^{\left( i \right)}\left( t \right) \overset{\mathrm{def}}{=}C\left( A_{\varepsilon}^{\left( i \right)}+B_{\varepsilon}^{\left( i \right)}+C_{\varepsilon}^{\left( i \right)}+\frac{C}{\varepsilon}+\parallel u_{t}^{\left( i \right)}\parallel _{L^4}^{2}+1 \right) .$$
	One deduces from \eqref{2.36} that 
	\begin{equation*}
		\begin{split}
			\int_0^{T_*}{D_{\varepsilon}^{\left( i \right)}\left( t \right) dt}&\le C\left( \int_0^{T_*}{\left\| u \right\| _{W^{2,q}}^{2}dt} \right) ^{\frac{1}{2}}\left( \int_0^{T_*}{1dt} \right) ^{\frac{1}{2}}+\frac{C}{\varepsilon}T_*+C\int_0^{T_*}{\parallel u_{t}^{\left( i \right)}\parallel _{H^1}^{2}dt}+CT_*
			\\
			&\le C\sqrt{T_*}+\frac{C}{\varepsilon}T_*+C+CT_*.
		\end{split}
	\end{equation*}
	Define
	\begin{equation}
		T^*\overset{\mathrm{def}}{=}\min \left\{ T_*,\varepsilon \right\} ,
	\end{equation}
	so we have
	\begin{equation}\label{2.46}
		\underset{0\le t\le T^*}{\mathrm{sup}}\varphi ^{\left( i+1 \right)}\left( t \right) +\mu \int_0^{T^*}{\parallel \nabla \tilde{u}^{\left( i+1 \right)}\parallel _{L^2}^{2}dt}\le \varepsilon \exp \left\{ C \right\} \int_0^{T^*}{\parallel \nabla \tilde{u}^{\left( i \right)}\parallel _{L^2}^{2}dt}.
	\end{equation}
	Now, taking $\varepsilon$ suitably small such that
	$$\varepsilon \exp \left( C \right) \le \frac{1}{2}\mu. $$
	By summing \eqref{2.46} over $i$ from 1 to $\infty$, we obtain
	\begin{equation}
		\sum_{i=1}^{\infty}{\underset{0\le t\le T^*}{\mathrm{sup}}\varphi ^{\left( i+1 \right)}\left( t \right)}+\sum_{i=1}^{\infty}{\mu \int_0^{T^*}{\parallel \nabla \tilde{u}^{\left( i+1 \right)}\parallel _{L^2}^{2}dt}}\le \sum_{i=1}^{\infty}{\frac{1}{2}\mu \int_0^{T^*}{\parallel \nabla \tilde{u}^{\left( i \right)}\parallel _{L^2}^{2}dt}}.
	\end{equation}
	Hence 
	\begin{equation}
		\sum_{i=1}^{\infty}{\underset{0\le t\le T^*}{\mathrm{sup}}\varphi ^{\left( i+1 \right)}\left( t \right)}+\sum_{i=1}^{\infty}{\frac{1}{2}\mu \int_0^{T^*}{\parallel \nabla \tilde{u}^{\left( i+1 \right)}\parallel _{L^2}^{2}dt}}\le \frac{1}{2}\mu \int_0^{T^*}{\parallel \nabla \tilde{u}^{\left( 1 \right)}\parallel _{L^2}^{2}dt}\le C.
	\end{equation}
	Therefore, we conclude that $\left( \rho ^{\left( i \right)},u^{\left( i \right)},P^{\left( i \right)},B^{\left( i \right)} \right)$ converges to a limit $(\rho, u, P, B)$ in the following strong sense
	\begin{equation*}
		\left( \rho ^{\left( i \right)},P^{\left( i \right)},B^{\left( i \right)} \right) \rightarrow \left( \rho ,P,B \right) \,\,\mathrm{in}\,\, L^{\infty}\left( 0,T^*;L^2 \right) ,u^{\left( i \right)}\rightarrow u\,\,\mathrm{in}\,\, L^2\left( 0,T^*;H^1 \right) .
	\end{equation*}
	A standard argument shows that $(\rho,u,P,B)$ is a weak solution to the nonlinear problem. Due to the uniform estimates \eqref{2.36}, the limit functions $(\rho, u, P, B)$ inherit the regularity properties of the approximate sequence 
	\begin{equation}
		\begin{split}
			\underset{0\le t\le T^*}{\mathrm{sup}}&\left\| \sqrt{\rho}u_t \right\| _{L^2}+\int_0^{T^*}{\left\| u_t \right\| _{H^1}^{2}+\left\| u \right\| _{W^{2,q}}^{2}dt}
			\\
			&+\underset{0\le t\le T^*}{\mathrm{sup}}\left( \left\| \left( \rho , P, B \right) \right\| _{W^{1,q}}+\left\| \left( \rho _t, P_t, B_t \right) \right\| _{L^q}+\left\| u \right\| _{H^2} \right) \le C.
		\end{split}
	\end{equation}
	Consequently, the limit pair $(\rho, u, P, B)$ satisfies the equations almost everywhere. Thus, we conclude that $(\rho, u, P, B)$ is the strong solution to the nonlinear problem on the time interval $[0, T^*]$. Finally, we prove the uniqueness of the strong solutions.
	\par
		Let $(\rho_1, u_1, P_1, B_1)$ and $(\rho_2, u_2, P_2, B_2)$ be two strong solutions to the problem \eqref{1.2} in the class defined by \eqref{2.8} and \eqref{2.100} with the same initial data $(\rho_0, u_0, P_0, B_0)$.
		
		Define the difference functions
		\begin{equation*}
			\tilde{\rho} = \rho_1 - \rho_2, \quad \tilde{u} = u_1 - u_2, \quad \tilde{P} = P_1 - P_2, \quad \tilde{B} = B_1 - B_2.
		\end{equation*}
	Then we get the error system
		\begin{equation}\label{diff_sys}
			\begin{cases}
				\tilde{\rho}_t + \nabla \cdot (\tilde{\rho} u_1) + \nabla \cdot (\rho_2 \tilde{u}) = 0, \\
				\rho_1 \tilde{u}_t + \rho_1 u_1 \cdot \nabla \tilde{u} - \mu \Delta \tilde{u} - (\mu + \lambda) \nabla (\nabla \cdot \tilde{u}) \\
				\quad = -\tilde{\rho} (u_2)_t - \tilde{\rho} u_2 \cdot \nabla u_2 - \rho_1 \tilde{u} \cdot \nabla u_2 - \nabla \tilde{P} \\
				\quad \quad +(B_1 \cdot \nabla)\tilde{B} + (\tilde{B} \cdot \nabla)B_2 - \frac{1}{2}\nabla (\tilde{B} \cdot (B_1 + B_2)), \\
				\tilde{P}_t + u_1 \cdot \nabla \tilde{P} + \gamma P_1 \nabla \cdot \tilde{u} = -\tilde{u} \cdot \nabla P_2 - \gamma \tilde{P} \nabla \cdot u_2, \\
				\tilde{B}_t - \nabla \times (u_1 \times \tilde{B}) - \nabla \times (\tilde{u} \times B_2) = 0,
				\\
				\nabla \cdot \tilde{B}=0.
			\end{cases}
		\end{equation}
		Multiplying the first equation of \eqref{diff_sys} by $\tilde{\rho}$ and integrating by parts, we have
		\begin{equation}\label{5.51}
			\begin{split}
					\frac{1}{2}\frac{d}{dt}\| \tilde{\rho}\| _{L^2}^{2}&=-\int{\nabla}\cdot (\tilde{\rho}u_1)\tilde{\rho}\,dx-\int{\nabla}\cdot (\rho _2\tilde{u})\tilde{\rho}\,dx\\
					&=\frac{1}{2}\int{(}\nabla \cdot u_1)|\tilde{\rho}|^2\,dx-\int{(}\nabla \rho _2\cdot \tilde{u}+\rho _2\nabla \cdot \tilde{u})\tilde{\rho}\,dx\\
					&\le C\| \nabla u_1\| _{L^{\infty}}\| \tilde{\rho}\| _{L^2}^{2}+C\left( \| \nabla \rho _2\| _{L^q}\| \tilde{\rho}\| _{L^2}\| \nabla \tilde{u}\| _{L^2}+\| \rho _2\| _{L^{\infty}}\| \tilde{\rho}\| _{L^2}\| \nabla \tilde{u}\| _{L^2} \right)\\
					&\le C\left( \| \nabla u_1\| _{L^{\infty}}+\frac{\| \rho _2\| _{W^{1,q}}^{2}}{\varepsilon} \right) \| \tilde{\rho}\| _{L^2}^{2}+\varepsilon \| \nabla \tilde{u}\| _{L^2}^{2},
			\end{split}
		\end{equation}
		where $\varepsilon>0$ is a small constant to be determined.
		Similarly, for the pressure $\tilde{P}$, we arrive at
		\begin{equation}
			\begin{split}
				\frac{1}{2}\frac{d}{dt}\| \tilde{P}\| _{L^2}^{2}&\le C\left( \| \nabla u_1\| _{L^{\infty}}+\frac{\| P_2\| _{W^{1,q}}^{2}}{\varepsilon} \right) \| \tilde{P}\| _{L^2}^{2}+\varepsilon \| \nabla \tilde{u}\| _{L^2}^{2}.
			\end{split}
		\end{equation}
		Taking $L^2$ inner product of the fourth equation of $\eqref{diff_sys}$ with $\tilde{B}$, we obtain
	\begin{equation}
		\begin{split}
			\frac{1}{2}\frac{d}{dt}\| \tilde{B}\| _{L^2}^{2}&=-\int{(u_1\cdot \nabla \tilde{B})\cdot \tilde{B}\,dx}-\int{(\tilde{u}\cdot \nabla B_2)\cdot \tilde{B}\,dx}
			\\
			&\quad +\int{((B_1\cdot \nabla)\tilde{u}+(\tilde{B}\cdot \nabla)u_2)\cdot \tilde{B}\,dx}
			\\
			&\quad -\int{(B_1(\nabla \cdot \tilde{u})+\tilde{B}(\nabla \cdot u_2))\cdot \tilde{B}\,dx}
			\\
			&=\frac{1}{2}\int{(\nabla \cdot u_1)|\tilde{B}|^2\,dx}+\int{(\tilde{B}\cdot \nabla u_2)\cdot \tilde{B}\,dx}-\int{|\tilde{B}|^2(\nabla \cdot u_2)\,dx}
			\\
			&\quad +\int{((B_1\cdot \nabla)\tilde{u}-B_1(\nabla \cdot \tilde{u})-(\tilde{u}\cdot \nabla)B_2)\cdot \tilde{B}\,dx}
			\\
			&\le C\left( \| \nabla u_1\| _{L^{\infty}}+\| \nabla u_2\| _{L^{\infty}} \right) \| \tilde{B}\| _{L^2}^{2}
			\\
			&\quad +C\left( \| B_1\| _{L^{\infty}}+\| \nabla B_2\| _{L^q} \right) \| \nabla \tilde{u}\| _{L^2}\| \tilde{B}\| _{L^2}
			\\
			&\le C\left( \| \nabla u_1\| _{L^{\infty}}+\| \nabla u_2\| _{L^{\infty}}+\frac{\| B_1\| _{L^{\infty}}^{2}+\| \nabla B_2\| _{L^q}^{2}}{\varepsilon} \right) \| \tilde{B}\| _{L^2}^{2}+\varepsilon \| \nabla \tilde{u}\| _{L^2}^{2}.
		\end{split}
	\end{equation}
		Now multiplying the second equation of \eqref{diff_sys} by $\tilde{u}$ and integrating by parts
\begin{equation}\label{5.54}
	\begin{split}
		\frac{1}{2}\frac{d}{dt}&\int{\rho _1| \tilde{u}|^2dx}+\mu \| \nabla \tilde{u}\| _{L^2}^{2}+\left( \mu +\lambda \right) \| \nabla \cdot \tilde{u}\| _{L^2}^{2}
		\\
		=&-\int{\tilde{\rho}\left( (u_2)_t+u_2\cdot \nabla u_2 \right) \cdot \tilde{u}\,dx}-\int{\rho _1(\tilde{u}\cdot \nabla u_2)\cdot \tilde{u}\,dx}
		\\
		&+\int{\tilde{P}\nabla \cdot \tilde{u}\,dx}
		\\
		&+\int{\left( -(B_1\cdot \nabla)\tilde{u}\cdot \tilde{B}+(\tilde{B}\cdot \nabla)B_2\cdot \tilde{u} \right) \,dx}+\int{\left( \tilde{B}\cdot \frac{B_1+B_2}{2} \right) \nabla \cdot \tilde{u}\,dx}
		\\
		\le& \| \tilde{\rho}\| _{L^2}\| (u_2)_t+u_2\cdot \nabla u_2\| _{L^q}\| \nabla \tilde{u}\| _{L^2}+\| \nabla u_2\| _{L^{\infty}}\| \sqrt{\rho_1}\tilde{u}\| _{L^2}^{2}
		\\
		&+\| \tilde{P}\| _{L^2}\| \nabla \tilde{u}\| _{L^2}
		\\
		&+\| B_1\| _{L^{\infty}}\| \nabla \tilde{u}\| _{L^2}\| \tilde{B}\| _{L^2}+\| \tilde{B}\| _{L^2}\| \nabla B_2\| _{L^q}\| \nabla \tilde{u}\| _{L^2}
		\\
		&+\frac{1}{2}\left( \| B_1\| _{L^{\infty}}+\| B_2\| _{L^{\infty}} \right) \| \tilde{B}\| _{L^2}\| \nabla \tilde{u}\| _{L^2}
		\\
		\le& C\| \nabla u_2\| _{L^{\infty}}\| \sqrt{\rho_1}\tilde{u}\| _{L^2}^{2}+C_{\varepsilon}\left( \| \tilde{\rho}\| _{L^2}^{2}+\| \tilde{P}\| _{L^2}^{2}+\| \tilde{B}\| _{L^2}^{2} \right) +\varepsilon \| \nabla \tilde{u}\| _{L^2}^{2},
	\end{split}
\end{equation}
where $C_{\varepsilon}$ depends on $\left\| \left( u_2 \right) _t \right\| _{L^q},\left\| u_2 \right\| _{W^{1,q}},\left\| B_1 \right\| _{W^{1,q}},\left\| B_2 \right\| _{W^{1,q}}$.
\par
		Combining \eqref{5.51}-\eqref{5.54}, and let $\varepsilon=\mu/8$, we get
		\begin{equation}
			\begin{split}
			\frac{d}{dt}&\int{\rho _1|\tilde{u}|^2dx}+\frac{d}{dt}\left( \| \tilde{\rho}\| _{L^2}^{2}+\| \tilde{P}\| _{L^2}^{2}+\| \tilde{B}\| _{L^2}^{2} \right) +\mu \| \nabla \tilde{u}\| _{L^2}^{2}
			\\
			\le& C\left( \| \tilde{\rho}\| _{L^2}^{2}+\| \tilde{P}\| _{L^2}^{2}+\| \tilde{B}\| _{L^2}^{2} \right) 
			\\
			&+C\left( \| \nabla u_1\| _{L^{\infty}}+\| \nabla u_2\| _{L^{\infty}} \right) \left( \| \tilde{\rho}\| _{L^2}^{2}+\| \tilde{P}\| _{L^2}^{2}+\| \tilde{B}\| _{L^2}^{2} +\|\sqrt{\rho_1}\tilde{u}\| _{L^2}^{2}\right) .
			\end{split}
		\end{equation}
		Using Gronwall inequality and the integrability of $\| \nabla u_1\| _{L^{\infty}}+\| \nabla u_2\| _{L^{\infty}}$, we finally get $\rho_1 = \rho_2$, $P_1 = P_2$, $B_1 = B_2$, and $u_1 = u_2$ almost everywhere. The proof is complete.
\end{proof}	
	\section*{Acknowledgments}
X.-D. Huang is partially supported by CAS Project for Young Scientists in Basic Research
(Grant No. YSBR-031), NNSFC (Grant Nos. 12494542, 11688101) and National Key R\&D Program of China (Grant No. 2021YFA1000800).

	\bibliographystyle{siam}

\end{document}